\documentclass[11pt,reqno,a4paper]{amsart}

\usepackage{a4wide}
\usepackage[english]{babel}              
\usepackage[latin1]{inputenc}
\usepackage{amscd}
\usepackage{amsfonts}
\usepackage{amsgen}
\usepackage{amsmath}
\usepackage{amssymb}
\usepackage{amstext}
\usepackage{amsthm}
\usepackage{graphicx}
\usepackage{indentfirst}
\usepackage{latexsym}
\usepackage{epsfig}
\usepackage{wrapfig}
\usepackage[all,knot,arc,import,poly]{xy}
\usepackage[usenames]{color}
\usepackage[mathscr]{eucal}
\usepackage{ dsfont } 

\newtheorem{Df}{Definition}[section]
\newtheorem{Th}[Df]{Theorem}
\newtheorem{Prop}[Df]{Proposition}
\newtheorem{Lem}[Df]{Lemma}

\newtheorem{Rmk}[Df]{Remark}

\newtheorem{Cor}[Df]{Corollary}
\newtheorem{Ass}[Df]{Assumption}

\newcommand{\bc}{\begin{center}}
\newcommand{\ec}{\end{center}}

\newcommand{\ccal}{\mathcal{C}}

\newcommand{\jcal}{\mathcal{J}}
\newcommand{\lcal}{\mathcal{L}}
\newcommand{\pcal}{\mathcal{P}}

\newcommand{\nds}{\mathds{N}}
\newcommand{\rds}{\mathds{R}}

\newcommand{\Ll}{\Lambda(\lambda)}
\newcommand{\Lxx}{\Lambda^{*}(x)}
\newcommand{\lt}{\left}
\newcommand{\rt}{\right}

\newcommand{\ds}{\displaystyle}

\hyphenation{ca-mi-nhos}

\title{Large deviation for return times}
\author{Adriana Coutinho, J\'er\^ome Rousseau, Beno\^it Saussol}
\thanks{This work was partially supported by CAPES, CNPq and FAPESB}
\begin{document}

\maketitle

\begin{abstract}
  We prove a large deviation result for return times of the orbits of a dynamical system in a $r$-neighbourhood of an initial point $x$.
  Our result may be seen as a differentiable version of the work by Jain and Bansal who considered the return time of a stationary and ergodic process defined in a space
  of infinite sequences.
\end{abstract}

\keywords{Keywords: Return time, exponential rate, conformal repeller, large deviation.}

\section{Introduction}
Consider a dynamical system $(X, \mathcal{A}, g, \mu)$ where $X$ is a compact metric space, $\mathcal{A}$ is a $\sigma$-algebra on $X$, $g:X\rightarrow X$ is a
measurable map and $\mu$ an invariant probability measure on $(X, \mathcal{A}).$
Let $A \subset X$ be a measurable set of positive measure. An important result in ergodic theory is the Poincar\'e's recurrence theorem. It states that any
probability measure preserving map has almost everywhere recurrence. More precisely, for $\mu$-almost every $x \in A$ we have that $\{n: g^nx \in A\}$ is infinite.
It is natural to ask for more quantitative results of the recurrence. Given a point $x \in A$, the first return time of the orbit of $x$ to the set $A$ is given by
$$\tau_A(x)= \min\lt\{n \geq 1: g^nx \in A\rt\}.$$
 In \cite{Kac}, Kac has proven that, when the system is ergodic,
 $$\int_A \tau_A d\mu=1.$$
In other words, the average of the return time in a set $A$ is equal to the inverse of the measure of this set.
This subject have been further studied by many authors.
Boshernitzan \cite{Mbos} has established a link between the Hausdorff dimension of $X$ and the time needed by an orbit to approach its initial point.
To review results on recurrence see \cite{Saussol} and references therein. The author considers an expanding map of the interval and proves results for recurrence rates, limiting distributions of return times, and short returns. In \cite{Galvess} was presented an upper bound for the exponential approximation of the law of a hitting time in a mixing dynamical system.

Several works already addressed large deviations for return time.
Abadi and Vaienti in \cite{Abadivaiente} proved large deviation properties of $\tau(C_n)/n,$ where
$\tau(C_n)$ is the first return of a n-cylinder to itself. More precisely, if the
system is $\psi$-mixing, if $\psi(0)<1$ and the R\'enyi entropies exist for all integers $\beta$, then for $\delta \in (0,1],$ the limit
$$\lim_{n \to \infty}\frac{1}{n}\log \mu\lt\{x: \tau(C_n) \leq [\delta n]\rt\}:=M(\delta)$$
exists. In addition, they explicit the form of $M(\delta)$.

A large deviation result for the $n$th return times $\tau_A^n(x)$ into a fixed set $A$ we also considered by
Chazottes and Leplaideur \cite{ChLe} (See also \cite{LeSa}).
The Birkhoff theorem gives that for $\mu$-almost every point $x$
$$\lim_{n \to \infty} \frac{\tau^n_A(x)}{n}=\frac{1}{\mu(A)}.$$
For Axiom A diffeomorphisms and equilibrium states $\mu$, they prove the existence of a rate function $\Phi_A$, such that for every $u \geq \frac{1}{\mu(A)},$
$$\lim_{n \to \infty} \frac{1}{n} \log \mu \lt\{\frac{\tau_A^n}{n} \geq u\rt\}=\Phi_A(u),$$
with the appropriate change in the definition when $0 \leq u \leq \frac{1}{\mu(A)}$.

Our result concerns a different notion of return time, and may be seen as a differentiable version of a recent work by Jain and Bansal \cite{Bansal}. They studied large deviation property for repetition times under $\phi$-mixing conditions. Let $H$ denote the entropy rate of a finite-valued process $X=(X_n)$ and $x$ a particular realization of $X$. Define the first return time of
$x_1^n$ as $$R_n(x)=\min \lt\{j \geq 1: x_1^n=x_{-j+1}^{-j+n}\rt\}.$$ We say that $X$ have exponential rates for entropy if for every $\epsilon>0,$ we have
$$P\lt(\lt\{x_1^n:2^{-n(H+\epsilon)} \leq P(x_1^n) \leq 2^{-n(H-\epsilon)}\rt\}\rt)\leq 1-r(n,\epsilon),$$ where $r(\epsilon,n)=e^{-k(\epsilon)n},$ with
$k(\epsilon)$ a real valued positive function of $\epsilon.$ They proved that for an exponentially $\phi$-mixing process with exponential rates for entropy,
$$P\lt(\lt|\frac{\log R_n(X)}{n}-H\rt|> \epsilon\rt)\leq 2e^{-I(\epsilon)n}\ \mbox{for any $n$ sufficiently large.}$$ Where $I(\epsilon)$ is a real positive valued function for all $\epsilon>0$ and $I(0)=0.$

In the study of quantitative recurrence, an object of investigation is the return time $\tau_r(x)$ of a point  $x\in X$ under the map $g$ in its $r$-neighborhood, defined as follows:
$$\tau_r(x)=\tau_{B(x,r)}(x)= \min\{n \geq1: d(g^nx,x)<r\}.$$
If we denote with
$$\underline{d}_{\mu}(x)=\underset{r \rightarrow 0}{\underline{\lim}} \frac{\log \mu(B(x,r))}{\log r} \ \ \mbox{and} \ \
\overline{d}_{\mu}(x)=\underset{r \rightarrow 0}{\overline{\lim}} \frac{\log \mu(B(x,r))}{\log r},$$
the lower and upper pointwise dimensions of the measure $\mu$ at the point $x \in X,$ it was proved by Barreira and Saussol \cite{BarS} that
$$\underset{r \rightarrow 0}{\underline{\lim}} \frac{\log \tau_r(x)}{- \log r} \leq \underline{d}_\mu(x) \ \ \mbox{and} \ \
 \underset{r \rightarrow 0}{\overline{\lim}} \frac{\log \tau_r(x)}{- \log r} \leq \overline{d}_\mu(x), $$
for $\mu$-almost every $x \in X.$ If the system has a super-polynomial decay of correlations, Saussol in \cite{Saussol1} showed that equalities will hold for the expressions above. This implies that
$$\log \tau_r(x) \underset{r \rightarrow 0}{\sim} \log \lt(r^{-d_{\mu}(x)}\rt).$$

Our aim is to study the limiting behavior as $r \to 0$ of
$\mu\lt(\tau_r \geq r^{-d_{\mu}- \epsilon}\rt)$ and $\mu\lt(\tau_r \leq r^{-d_{\mu}+ \epsilon}\rt).$ This characterization is via asymptotic exponential
bound. We consider the limits
$$\underset{r \rightarrow 0}{\underline{\lim}} \frac{1}{\log r} \log \mu\lt(\tau_r \geq r^{-d_{\mu}- \epsilon}\rt) \ \mbox{and} \
\underset{r \rightarrow 0}{\underline{\lim}} \frac{1}{\log r} \log \mu\lt(\tau_r \leq r^{-d_{\mu}+ \epsilon}\rt).$$
Large deviations results are often related to multifractal analysis \cite{Pesin}. It turns out that
in the case of conformal repellers, the multifractal spectra is degenerate \cite{SaWu,FengWu}, that is
$$\dim_H\lt\{x \in X: \underset{r \to 0}{{\lim}} \frac{\log \tau_r(x)}{-\log r}=\alpha\rt\}=\dim_H X$$
for any $0\le\alpha \leq \infty$. It is not clear if this influences large deviations for return time.

This work is organized as follows. In Section \ref{generalresult} we define rates functions for dimension and for fast return times and state the general result, which is proven in Section \ref{proofmainresult}. In Section \ref{aprepeller} we consider a $C^{1+\alpha}$ conformal repeller and an equilibrium state of a H\"older potential. Then, we compute the rates functions and applies the main theorem to obtain large deviation estimates for return times for repeller.

\section{Large deviation estimates for return times in a general setting}\label{generalresult}

Let $g:X\rightarrow X$ be a measurable map and $\mu$ an invariant probability measure on $(X, \mathcal{A}).$

\begin{Df}
The measure $\mu$ is called exact dimensional if there exists a constant $d_\mu$ such that
$$\underline{d}_{\mu}(x)=\overline{d}_{\mu}(x)=d_{\mu} \ \ \mbox{for} \ \ \mu\mbox{-almost every} \ \ x \in X.$$
\end{Df}
We recall that the Hausdorff dimension of a probability measure $\mu$ on $X$ is given by
$$\dim_H \mu=\inf \{\dim_H Z: \mu(Z)=1\},$$
where $\dim_H Z$ denotes the Hausdorff dimension of $Z.$

Moreover, for an exact dimensional measure, the Hausdorff dimension and the local dimension coincide:
\begin{Prop} [\cite{Young}]
If $\mu$ is exact dimensional, then
$$d_{\mu}=\dim_H \mu.$$
\end{Prop}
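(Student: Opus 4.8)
The statement is the classical identity of Young \cite{Young}, and the plan is to squeeze $\dim_H\mu$ between $d_\mu$ and $d_\mu$ by means of two standard Billingsley/Frostman-type comparisons between pointwise and Hausdorff dimension, played against each other via exact dimensionality. I would first fix the set $Y=\{x\in X:\underline{d}_\mu(x)=\overline{d}_\mu(x)=d_\mu\}$, which has full $\mu$-measure by hypothesis and is measurable (because $x\mapsto\mu(B(x,r))$ is lower semicontinuous and $r\mapsto\mu(B(x,r))$ is monotone, so the dimension functions are Borel and the auxiliary sets below are Borel as well).

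For the bound $\dim_H\mu\leq d_\mu$ I would run a $5r$-covering (Frostman) argument on $Y$. Fix $\epsilon>0$. Since $\overline{d}_\mu(x)=d_\mu$ on $Y$, each $x\in Y$ satisfies $\mu(B(x,r))\geq r^{d_\mu+\epsilon}$ for all $r$ small enough (depending on $x$). Given $\delta>0$, cover $Y$ by such balls of radius $<\delta$ and, by the $5r$-covering lemma (valid in any metric space), extract a disjoint subfamily $\{B(x_i,r_i)\}$ such that $\{B(x_i,5r_i)\}$ still covers $Y$. Then
$$\sum_i(5r_i)^{d_\mu+2\epsilon}\leq 5^{d_\mu+2\epsilon}\delta^{\epsilon}\sum_i r_i^{d_\mu+\epsilon}\leq 5^{d_\mu+2\epsilon}\delta^{\epsilon}\sum_i\mu(B(x_i,r_i))\leq 5^{d_\mu+2\epsilon}\delta^{\epsilon},$$
using disjointness and $\mu(X)=1$. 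Letting $\delta\to0$ gives $\mathcal{H}^{d_\mu+2\epsilon}(Y)=0$, hence $\dim_H Y\leq d_\mu+2\epsilon$; since $\epsilon$ is arbitrary and $\mu(Y)=1$ we get $\dim_H\mu\leq\dim_H Y\leq d_\mu$.

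For $\dim_H\mu\geq d_\mu$, take any measurable $Z$ with $\mu(Z)=1$, so that $Z\cap Y$ still has full measure. Assume $d_\mu>0$ (the case $d_\mu=0$ being trivial) and fix $0<\epsilon<d_\mu$. Since $\underline{d}_\mu(x)=d_\mu$ on $Y$, for $x\in Z\cap Y$ one has $\mu(B(x,r))\leq r^{d_\mu-\epsilon}$ for all small $r$, so the sets $Z_k=\{x\in Z\cap Y:\mu(B(x,r))\leq r^{d_\mu-\epsilon}\ \forall\,0<r\leq1/k\}$ increase to $Z\cap Y$, and I would fix $k$ with $\mu(Z_k)>0$. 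For any cover $\{U_i\}$ of $Z_k$ by sets of diameter $|U_i|<1/(2k)$, each $U_i$ meeting $Z_k$ lies in a ball $B(x_i,2|U_i|)$ with $x_i\in Z_k$, so $\mu(U_i)\leq(2|U_i|)^{d_\mu-\epsilon}$ and therefore
$$\sum_i|U_i|^{d_\mu-\epsilon}\geq 2^{-(d_\mu-\epsilon)}\sum_{i:\,U_i\cap Z_k\neq\emptyset}\mu(U_i)\geq 2^{-(d_\mu-\epsilon)}\mu(Z_k)>0.$$
Hence $\mathcal{H}^{d_\mu-\epsilon}(Z_k)>0$, so $\dim_H Z\geq\dim_H Z_k\geq d_\mu-\epsilon$; letting $\epsilon\to0$ and taking the infimum over all admissible $Z$ yields $\dim_H\mu\geq d_\mu$, which together with the previous step gives $d_\mu=\dim_H\mu$.

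I do not expect a genuine obstacle: the content is entirely carried by the two classical comparison estimates, and the only points requiring attention are the measurability of $Y$ and of the sets $Z_k$, the harmless degenerate case $d_\mu=0$, and the fact that the $5r$-covering lemma holds on a general compact metric space. As a shortcut one may simply cite \cite{Young}.
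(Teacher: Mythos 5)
Your proof is correct, but the paper does not actually prove this proposition: it is stated with a citation to \cite{Young} and no argument is given. What you have written out is essentially the standard Billingsley/Frostman comparison that underlies Young's theorem, so there is nothing in the paper to compare against beyond the citation itself.

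A few remarks on the details. In the upper bound, the threshold $r(x)$ below which $\mu(B(x,r))\geq r^{d_\mu+\epsilon}$ holds does depend on $x$; what saves the argument is precisely that the collection $\{B(x,r):x\in Y,\ 0<r<\min(\delta,r(x))\}$ is a \emph{Vitali} cover of $Y$, so the $5r$-covering lemma applies directly and one does not need to stratify $Y$ into sets $Y_k$ with uniform thresholds as you do in the lower bound. You could make this explicit, but the argument as written is sound. In the lower bound, the reduction to $Z_k$ is the right move, and the use of $B(x_i,2|U_i|)$ with $x_i\in U_i\cap Z_k$ is correct; the degenerate case $|U_i|=0$ only matters if $\mu$ has atoms, which forces $d_\mu=0$ and is already excluded. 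Your remark about measurability of $Y$ (lower semicontinuity of $x\mapsto\mu(B(x,r))$ for open balls, monotonicity in $r$, so the local dimensions are Borel via $\liminf/\limsup$ along rational $r$) is the right justification. In short, this is a correct self-contained proof of a fact the paper merely cites.
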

We now define the rates functions which will appear in our large deviations estimates.
The first one is related to the deviations in the pointwise dimension; it has been computed in \cite{Pesin} in the case of conformal repellers.
\begin{Df} The exponential rate for dimension is defined for $\epsilon>0$ by:
\begin{equation} \label{expratedim}
\underline{\psi}(\pm\epsilon)=\underset{r \rightarrow 0}{\underline{\lim}}\frac{1}{\log r}\log \mu \left(\left\{ \frac{\log \mu(B(x,r)) }{-\log r} \in I_{\pm\epsilon} \right\}\right),
\end{equation}
where $I_{\epsilon}=(-\infty, -d_{\mu} - \epsilon)$ and $I_{-\epsilon}=(-d_{\mu}+\epsilon, +\infty).$
\end{Df}
The second quantifies the probability  of quick returns near the origin.
\begin{Df} The exponential rate for fast return times is defined for $\epsilon, a>0$ by:

\begin{equation}\label{exprateretur}
\underline{\varphi}(a,\epsilon)=\underset{r \rightarrow 0}{\underline{\lim}} \frac{1}{\log r} \log \mu\lt(\lt\{x_0: \mu_{B(x_0,2r)}\lt(\tau_{B(x_0,2r)}
\leq r^{-d_{\mu}+\epsilon}\rt) \geq Cr^a\rt\}\rt), \ 
\end{equation}
for some constant $C>0$.
\end{Df}

We may now state our main result. We emphasize that the value of $C$ in \eqref{exprateretur} is irrelevant in the theorem.

\begin{Th}\label{principalth}
Let $(X, \mathcal{A}, g, \mu)$ be a dynamical system. Suppose that $\mu$ is an exact dimensional measure. Given $\epsilon>0,$ we have:
\begin{eqnarray}\label{f1}
\underset{r \rightarrow 0}{\underline{\lim}} \frac{1}{\log r} \log \mu\lt(\tau_r \geq r^{-d_{\mu}- \epsilon}\rt) &\geq &
\max_{\gamma \in (0,1)}\min\left\{(1-\gamma)\epsilon, \underline{\psi}(\gamma\epsilon)\right\} \\
  \label{f2}
  \underset{r \rightarrow 0}{\underline{\lim}} \frac{1}{\log r} \log \mu\lt(\tau_r \leq r^{-d_{\mu}+ \epsilon}\rt)
  &\geq& \max_{\substack {\gamma \in (0,1) \\ a,\epsilon''>0}}\min \left\{-\gamma\epsilon-\epsilon''+a, \underline{\psi}(\gamma\epsilon),\underline{\varphi}(a,\epsilon), \underline{\psi}(-\epsilon'')\right\}.
  \end{eqnarray}
\end{Th}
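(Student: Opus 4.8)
The plan is to derive both inequalities from a single mechanism: relate the event $\{\tau_r \geq r^{-d_\mu-\epsilon}\}$ (resp.\ $\{\tau_r \leq r^{-d_\mu+\epsilon}\}$) to the two ``good sets'' controlled by $\underline\psi$ and $\underline\varphi$, and estimate the complement by a union bound together with the measure of a bad dimension set. Throughout I will write $B_0 = B(x_0,r)$ and work with the doubled ball $B(x_0,2r)$ so that the induced measure $\mu_{B(x_0,2r)}$ from \eqref{exprateretur} is available.

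For \eqref{f1}: fix $\gamma \in (0,1)$ and split $X$ according to whether $\log\mu(B(x,r))/(-\log r)$ lies in $I_{\gamma\epsilon} = (-\infty,-d_\mu-\gamma\epsilon)$ or not. On the bad set the measure is at most $\mu(\{\log\mu(B(x,r))/(-\log r) \in I_{\gamma\epsilon}\})$, whose $\frac{1}{\log r}\log$ is, in the liminf, at least $\underline\psi(\gamma\epsilon)$ — this directly gives one term in the min. On the good complement we have $\mu(B(x,r)) \leq r^{d_\mu+\gamma\epsilon}$ (for small $r$), hence $\mu(B(x,r))$ is small, and I want to bound $\mu(\{\tau_r \geq r^{-d_\mu-\epsilon}\} \cap \text{good})$. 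The idea here is a Kac/Markov-type estimate: if the return time to $B(x,r)$ is very long, the orbit spends a long time outside $B(x,r)$, and since $\mu$ is invariant, the measure of points whose return time to a ball of measure $\leq r^{d_\mu+\gamma\epsilon}$ exceeds $r^{-d_\mu-\epsilon}$ is controlled by $r^{-d_\mu-\epsilon} \cdot r^{d_\mu+\gamma\epsilon} = r^{-\epsilon+\gamma\epsilon} = r^{(1-\gamma)\epsilon}$ — more precisely one covers the good set by a Vitali/Besicovitch subfamily of balls $B(x_i,r)$ of bounded overlap, and for each one $\mu(\{y \in B(x_i,r): g^j y \notin B(x_i,r)\ \forall\, 1\le j \le N\}) \le \mu(B(x_i,r))$ trivially, but summed against the number of iterates $N = r^{-d_\mu-\epsilon}$ using invariance and disjointness of the first $N$ preimages restricted appropriately, one gets a bound of order $N\mu(B(x_i,r)) \le r^{(1-\gamma)\epsilon}$ after summing over the bounded-overlap cover. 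Taking $\frac{1}{\log r}\log$ and the liminf yields the term $(1-\gamma)\epsilon$ (note $\log r<0$ so the inequality direction is as claimed). Combining the two pieces via $\log(a+b)\ge \min(\log a,\log b) - \log 2$ and $\frac{\log 2}{\log r}\to 0$, then optimizing over $\gamma$, gives \eqref{f1}.

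For \eqref{f2}: now I need a \emph{lower} bound on $\mu(\tau_r \leq r^{-d_\mu+\epsilon})$, i.e.\ quick returns are at least somewhat likely; equivalently I control the event $\{\tau_r > r^{-d_\mu+\epsilon}\}$ from above. Fix $\gamma\in(0,1)$, $a,\epsilon''>0$. Decompose $X$ into: (i) the bad dimension set where $\log\mu(B(x,r))/(-\log r) \in I_{\gamma\epsilon}$, handled by $\underline\psi(\gamma\epsilon)$; (ii) the set where $\log\mu(B(x,r))/(-\log r) \in I_{-\epsilon''} = (-d_\mu+\epsilon'',\infty)$, i.e.\ $\mu(B(x,r))$ is too small, handled by $\underline\psi(-\epsilon'')$; (iii) the ``return-time-bad'' set $\{x_0: \mu_{B(x_0,2r)}(\tau_{B(x_0,2r)} \le r^{-d_\mu+\epsilon'}) \ge Cr^a\}$ for a suitable $\epsilon' $ between the relevant exponents, handled by $\underline\varphi(a,\epsilon)$ (after matching $\epsilon'$ with $\epsilon$; this is where the exact choice of the inner exponent has to be tracked carefully). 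On the complement of (i)--(iii), for a typical $x$ the ball $B(x,r)$ has $\mu(B(x,r)) \in [r^{d_\mu+\gamma\epsilon}, r^{d_\mu-\epsilon''}]$ and returns into the doubled ball within time $r^{-d_\mu+\epsilon'}$ happen with conditional probability $< Cr^a$. A second-moment / counting argument on the induced system (the number of returns of the orbit of $x$ to $B(x,2r)$ up to time $T$ is $\asymp T\mu(B(x,2r))$ by Kac, and the conditional probability of a \emph{short} return given a return is $<Cr^a$) shows that on this complement $\tau_r \le r^{-d_\mu+\epsilon}$ with $\mu$-measure at least of order $r^{-\gamma\epsilon-\epsilon''+a}$; taking $\frac1{\log r}\log$ gives the first term $-\gamma\epsilon-\epsilon''+a$. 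Again assemble with $\log(\text{sum})\ge\min-\log 4$ and optimize.

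The main obstacle is step (iii)/the induced-system estimate for \eqref{f2}: passing from ``short returns into $B(x_0,2r)$ are rare for $\mu_{B(x_0,2r)}$'' to ``the global $\mu$-measure of $\{\tau_r \le r^{-d_\mu+\epsilon}\}$ is at least $r^{-\gamma\epsilon-\epsilon''+a}$'' requires controlling, uniformly over a bounded-overlap cover by balls $B(x_i,r)$, both the number of orbit visits to $B(x_i,2r)$ before time $r^{-d_\mu+\epsilon}$ (a Kac-type lower bound, which needs care since Kac gives an average, not a pointwise, statement — one likely uses a Markov inequality in the reverse direction or the ergodic theorem on the induced map) and the fraction of those visits that are genuine returns of $x$ itself into $B(x_i,r)$ rather than into the annulus. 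Matching the three small exponents $\gamma\epsilon$, $\epsilon''$, $a$ and the inner return exponent so that the arithmetic $-\gamma\epsilon-\epsilon''+a$ comes out exactly, while keeping all error terms $o(\log r)$, is the delicate bookkeeping; the factor $2$ in the radius and the constant $C$ are precisely the slack that makes this work, which is why the theorem can assert $C$ is irrelevant.
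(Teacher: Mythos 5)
Your high-level decomposition is essentially the paper's: split off the bad dimension set(s) and, for \eqref{f2}, the set where the conditional short-return probability is too large; cover the good part by a bounded-overlap family of balls; and combine via the elementary lemma $\underline{\lim}\frac{1}{\log r}\log\sum_i a_i(r)\geq\min_i\gamma_i$. But the per-ball estimates are where the proof actually lives, and your proposal has concrete problems in both halves. For \eqref{f1}, the correct per-ball bound comes from Kac plus Markov: $\mu\bigl(B_i\cap\{\tau_{B_i}\geq N\}\bigr)\leq N^{-1}\int_{B_i}\tau_{B_i}\,d\mu = N^{-1}=r^{d_\mu+\epsilon}$. You instead write the bound as $N\mu(B(x_i,r))\leq r^{-d_\mu-\epsilon}\cdot r^{d_\mu+\gamma\epsilon}=r^{-\epsilon+\gamma\epsilon}$, which is the wrong quantity (and is $>1$), and then assert $r^{-\epsilon+\gamma\epsilon}=r^{(1-\gamma)\epsilon}$, which is false since $-\epsilon+\gamma\epsilon=-(1-\gamma)\epsilon$. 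The correct bookkeeping is $r^{d_\mu+\epsilon}$ per ball, at most $(r/4)^{-d_\mu-\gamma\epsilon}$ balls because the centers lie in $A_{\gamma\epsilon}(r/4)$ and the shrunk balls are disjoint, giving the total $\lesssim r^{(1-\gamma)\epsilon}$. You also omit the link between $\tau_r$ and $\tau_{B_i}$: if $x\in B(x_i,r/2)$ and $g^nx\in B(x_i,r/2)$, then $d(g^nx,x)<r$, so on $B_i=B(x_i,r/2)$ one has $\tau_{B_i}\geq\tau_r$.

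For \eqref{f2} the gap is more serious. Your opening claim that one needs a \emph{lower} bound on $\mu(\tau_r\leq r^{-d_\mu+\epsilon})$ is the wrong direction: since $\log r<0$, the assertion $\underline{\lim}\frac{1}{\log r}\log\mu(\cdot)\geq c$ is equivalent to the \emph{upper} bound $\mu(\cdot)\lesssim r^{c-o(1)}$, so both \eqref{f1} and \eqref{f2} are upper bounds on a deviation probability. This sign confusion propagates into your ``second-moment/counting argument on the induced system,'' which (as written) would produce a lower bound on the measure and hence an upper bound on the rate — the opposite of what is needed — and is in any case unnecessary. The paper's mechanism is elementary: cover the good set by a disjoint family $B(x_i,r)$ with centers in $A_{\gamma\epsilon}(2r)\cap A_{(-\epsilon'')}(2r)\cap D_r$. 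The triangle inequality gives, for $x\in B(x_i,r)$ with $\tau_r(x)\leq T$, that $g^nx\in B(x,r)\subset B(x_i,2r)$ and hence $\tau_{B(x_i,2r)}(x)\leq T$; therefore
\begin{equation*}
\mu\bigl(B(x_i,r)\cap\{\tau_r\leq T\}\bigr)\leq\mu(B(x_i,2r))\,\mu_{B(x_i,2r)}\bigl(\tau_{B(x_i,2r)}\leq T\bigr)\leq\mu(B(x_i,2r))\,Cr^a
\end{equation*}
by the very definition of $D_r$. Summing, with $\mu(B(x_i,2r))\leq(2r)^{d_\mu-\epsilon''}$ from $A_{(-\epsilon'')}$ and at most $r^{-d_\mu-\gamma\epsilon}$ balls from $A_{\gamma\epsilon}$, yields $\lesssim r^{a-\gamma\epsilon-\epsilon''}$; the three complements contribute $\underline\psi(\gamma\epsilon)$, $\underline\psi(-\epsilon'')$ and $\underline\varphi(a,\epsilon)$. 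The triangle-inequality inclusion $B(x_i,r)\cap\{\tau_r\leq T\}\subset B(x_i,2r)\cap\{\tau_{B(x_i,2r)}\leq T\}$ does all of the work; no Kac-type lower bound, second moment, or induced-system ergodic argument is required.
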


This result is satisfactory in the sense that it can be applied in a broad class of dynamical systems,
provided one can estimate the rate functions $\underline{\psi}$ and $\underline{\varphi}$.

The rate function for dimension $\underline{\psi}$ is rather classical.
We can observe that in \eqref{f1} if the rate function for dimension
$\underline{\psi}$ is positive in some interval $(0,\epsilon),$ it readily implies that $\mu\lt(\tau_r \geq r^{-d_{\mu}-\epsilon}\rt)$ has a fast decay.

The rate function $\underline{\varphi}$ is not so well known. However, for several dynamical systems an estimation of the error in the approximation to the exponential law for return time has been computed. In many cases, including H\'enon maps \cite{ChCo}, it is possible to show that for some $a,b>0$, and any sufficiently small $r>0$,
\begin{enumerate}
\item[{\bf E1}] there exists a set $\Omega_r\subset X$ such that $\mu(\Omega_r^c)<r^b$;
\item[{\bf E2}] for all $x\in \Omega_r$,
$$\sup_{t\ge0}\lt|\mu_{B(x,r)}\lt(\tau_{B(x,r)}> \frac{t}{\mu(B(x,r))}\rt)-e^{-t}\rt| \leq r^a.$$
\end{enumerate}
The conditions E1-E2 imply that $\underline{\varphi}(a, \epsilon)\geq \min\{\underline{\psi}(a-\epsilon),b\} $; See Proposition~\ref{Henon} in Section~\ref{proofmainresult}.

\section{Large deviation estimates for return times for conformal repeller}\label{aprepeller}

In this section we apply our main result to conformal repellers (See \cite{Bar}).

Let $g:M \to M$ be a $C^{1+\alpha}$ map of a smooth manifold and consider a $g$-invariant compact set $J \subset M$. The map $g$ is said to be expanding on
$J$ if there exist constants $c>0$ and $\beta>1$ such that
$$\|d_xg^nv\| \geq c\beta^n \|v\|$$
for every $n \in \nds,$ $x \in J$ and $v \in T_x M.$ In addition, we call $J$ a repeller if there exists an open neighborhood $V$ of $J$ such that
$$J= \displaystyle\bigcap_{n \geq 0} g^{-n}V.$$

The map $g$ is said to be conformal on $J$ if
$$d_xg=a(x)Isom_x,$$
where $Isom_x$ denotes an isometry of the tangent space $T_x M.$

From now on, let $(J,g)$ be a conformal repeller.

Let $(\Sigma^{+}_{A}, \sigma)$ be a subshift of a finite type that defines a coding map $\chi: \Sigma^{+}_{A} \to J$ such that $\chi \circ \sigma= g\circ \chi.$
Let $\zeta$ be a H\"older continuous function on $J$ and $\mu=\mu_{\zeta}$ be the
equilibrium measure for $(g, \zeta)$. 
Let $\nu=\nu_\varphi$ be the Gibbs measure of the H\"older potential $\varphi=\zeta\circ\chi$ on $\Sigma^{+}_{A}$. Note that $\mu=\chi_*\nu$. There exists a constant $P(\varphi) \in \mathds{R}$ such that for some $\kappa_\varphi \geq 1,$ for any $x$ and $n$, we have
$$\frac{1}{\kappa_\varphi} \leq \frac{\nu(\jcal_n(x))}{exp(S_n\varphi(x)-nP(\varphi))} \leq \kappa_\varphi,$$
where $\jcal_n(x)$ is the cylinder of length $n$ containing $x$. Finally, consider the function $\psi$ such that $\log \psi=\varphi-P(\varphi).$

We collect some facts about HP-spectrum for dimensions.

\begin{Prop}[\cite{Pesin}]\label{hpspec}
For all $q\in \rds$, the following limit exists
\begin{equation}\label{specdim}
T(q)=\lim_{r \to 0} \frac{\log \int_J \mu(B(x,r))^{q-1}d\mu(x)}{-\log r}.
\end{equation}
In addition, the function $T(q)$ is real analytic for all $q \in \rds,$ $T(0)=\dim_H J,$ $T(1)=0,$ $T'(q) \leq 0$  and
$ {T}''(q) \geq 0.$ And ${T}''(q)>0$ if and only if the function $\log \psi - T'(q)\log |a(\chi(w))|$ is not cohomologous to a constant. If
and only if $\mu$ is not a measure of maximal dimension.
\end{Prop}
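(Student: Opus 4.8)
\noindent\emph{Proof strategy.}\ The plan is to reduce the computation of the limit \eqref{specdim} to a symbolic thermodynamic estimate on the subshift $(\Sigma^{+}_{A},\sigma)$ and to identify $T(q)$ as the root of a Bowen-type pressure equation. First I would pass to the coding. Since $g$ is $C^{1+\alpha}$ and conformal on $J$, the function $a$ is $\alpha$-H\"older and bounded away from $0$ and $\infty$, so $\log|a\circ\chi|$ is a H\"older potential on $\Sigma^{+}_{A}$, and a bounded distortion estimate for the inverse branches of $g$ gives, uniformly in $w$ and $n$,
\[ \mathrm{diam}\,\chi(\jcal_n(w)) \ \asymp\ \exp\lt(-S_n\log|a\circ\chi|(w)\rt). \]
Fix $r>0$ small and let $\mathcal W_r$ be the corresponding Moran cover, that is, the family of maximal cylinders $C=\chi(\jcal_n(w))$ with $\mathrm{diam}\,C<r$; these are pairwise essentially disjoint, each has diameter $\asymp r$, the cover has multiplicity bounded by a constant depending only on $J$, and, by the Gibbs inequality recalled in the introduction together with bounded distortion, any two members of $\mathcal W_r$ that are both met by a ball of radius $r$ have comparable $\nu$-measure. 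From this I would deduce the key comparison: for every $x\in J$ and every small $r$,
\[ \mu(B(x,r)) \ \asymp\ \nu\lt(C(x,r)\rt), \]
where $C(x,r)\in\mathcal W_r$ is the element containing $x$, with multiplicative constants independent of $x$ and $r$. I expect this geometric step to be the main obstacle: all the subsequent work is routine thermodynamic formalism, but controlling the shape of arbitrarily small Euclidean balls against the Markov partition \emph{uniformly} in the basepoint, including near the boundaries of partition elements, is precisely what forces the use of conformality and of uniform distortion bounds.

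Granting the comparison, I would define $T(q)$ as the unique $t\in\rds$ solving the Bowen equation $P\lt(q\log\psi - t\,\log|a\circ\chi|\rt)=0$; existence and uniqueness are immediate because $t\mapsto P(q\log\psi - t\log|a\circ\chi|)$ is continuous, tends to $\mp\infty$, and is strictly decreasing with derivative $-\int\log|a\circ\chi|\,d\mu_{q,t}<0$ (since $g$ expanding forces $\log|a\circ\chi|>0$). Using $\mu=\chi_*\nu$ and the comparison above,
\[ \int_J \mu(B(x,r))^{q-1}\,d\mu \ \asymp\ \sum_{C\in\mathcal W_r}\nu(C)^{q-1}\mu(C) \ =\ \sum_{C\in\mathcal W_r}\nu(C)^{q}. \]
For $C=\chi(\jcal_n(w))\in\mathcal W_r$ the Gibbs property gives $\nu(C)\asymp\exp(S_n\log\psi(w))$, while the Moran condition $\mathrm{diam}\,C\asymp r$ together with the distortion estimate gives $\exp(S_n\log|a\circ\chi|(w))\asymp r^{-1}$; writing $g_q=q\log\psi - T(q)\log|a\circ\chi|$ this yields $\nu(C)^{q}\asymp r^{-T(q)}\exp\lt(S_n g_q(w)\rt)$, and therefore
\[ \int_J \mu(B(x,r))^{q-1}\,d\mu \ \asymp\ r^{-T(q)}\sum_{C\in\mathcal W_r}\exp\lt(S_n g_q(w)\rt). \]
Because $P(g_q)=0$, a standard pressure estimate for variable-length generating families bounds the remaining sum above and below by constants independent of $r$; hence $\int_J\mu(B(x,r))^{q-1}\,d\mu\asymp r^{-T(q)}$, so the limit \eqref{specdim} exists and equals $T(q)$.

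It then remains to read off the regularity and shape of $T$. Real analyticity of $q\mapsto T(q)$ follows from the analytic implicit function theorem applied to $F(q,t)=P(q\log\psi - t\log|a\circ\chi|)$, using Ruelle's theorem that the topological pressure depends real-analytically on a H\"older potential, together with $\partial_t F\neq 0$. One has $T(1)=0$, since $P(\log\psi)=P(\varphi-P(\varphi))=0$, and $T(0)=\dim_H J$, since $P(-T(0)\log|a\circ\chi|)=0$ is Bowen's equation for the repeller. Differentiating $F(q,T(q))=0$ gives $T'(q)=\lt(\int\log\psi\,d\mu_q\rt)\big/\lt(\int\log|a\circ\chi|\,d\mu_q\rt)$, where $\mu_q$ is the equilibrium state of $g_q$; the denominator is positive, and the variational principle yields $\int\log\psi\,d\mu_q=\int\varphi\,d\mu_q-P(\varphi)\le -h_{\mu_q}(\sigma)\le 0$, so $T'(q)\le 0$. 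A second differentiation expresses $T''(q)$ as a positive multiple of the asymptotic variance $\sigma^2_{\mu_q}\lt(\log\psi-T'(q)\log|a\circ\chi|\rt)\ge 0$, which vanishes exactly when $\log\psi-T'(q)\log|a\circ\chi|$ is cohomologous to a constant; unwinding the pressure identities shows this cohomology relation is independent of $q$ and equivalent to $\varphi$ being cohomologous, up to an additive constant, to $-\dim_H J\cdot\log|a\circ\chi|$, i.e. to $\mu=\mu_\zeta$ being the unique measure of maximal dimension (the equilibrium state of $-\dim_H J\cdot\log|a\circ\chi|$). This gives the last two assertions.
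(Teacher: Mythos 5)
The paper gives no proof of this proposition: it is quoted verbatim from Pesin and Weiss, so there is no in-paper argument to compare against. Your proposal is a plausible reconstruction of the Pesin--Weiss strategy (symbolic reduction via the coding $\chi$, a Moran cover, Bowen's equation, Ruelle's analytic perturbation theory, and the variance/cohomology computation for $T''$), and the entire thermodynamic half of it is sound. There is, however, a genuine gap in the geometric reduction, precisely at the step you yourself flag as the main obstacle.

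The claim that any two members of $\mathcal W_r$ both met by a ball of radius $r$ have comparable $\nu$-measure is false, and consequently the pointwise upper bound in $\mu(B(x,r))\asymp\nu(C(x,r))$ does not hold uniformly in $x$. Bounded distortion and conformality force such cylinders to have comparable \emph{diameters}, but the Gibbs weight $\nu(\jcal_n(w))\asymp\exp(S_n\log\psi(w))$ depends on the whole symbolic itinerary, and two cylinders that are geometrically adjacent (on either side of a Markov boundary) can code wildly different itineraries. A concrete counterexample: for the middle-third Cantor set with a Bernoulli $(p,1-p)$ measure, $p\neq\tfrac12$, the generation-$n$ cylinders $[0\,1^{n-1}]$ and $[1\,0^{n-1}]$ abut across the central gap and have equal diameter $3^{-n}$, yet the ratio of their measures is $\bigl(p/(1-p)\bigr)^{\pm(n-2)}$, which tends to $0$ or $\infty$. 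So $\mu(B(x,r))/\nu(C(x,r))$ is unbounded along sequences of $x$ approaching the partition boundary, and your asserted comparison fails.

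What does survive, and is all you actually need, is the integrated comparison $\int_J\mu(B(x,r))^{q-1}\,d\mu\asymp\sum_{C\in\mathcal W_r}\nu(C)^{q}$. This can be established without any pointwise comparability, using only the two Moran properties you correctly identified --- each ball of radius $r$ meets at most $M$ elements of $\mathcal W_r$, and each element has diameter $\asymp r$ and is contained in the $2r$-ball around any of its points --- together with a convexity or Young's-inequality trick ($\nu(C)\nu(C')^{q-1}\le\tfrac1q\nu(C)^{q}+\tfrac{q-1}{q}\nu(C')^{q}$) to absorb the bounded multiplicity into a two-sided estimate on $\sum_C\nu(C)^q$. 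Once the pointwise claim is replaced by this argument, the remainder of your proposal (defining $T(q)$ as the root of $P(q\log\psi-T(q)\log|a\circ\chi|)=0$, real-analyticity by the implicit function theorem, $T(0)=\dim_H J$ and $T(1)=0$, $T'\le0$ from the variational inequality, $T''\ge0$ as an asymptotic variance, and the cohomology/maximal-dimension characterization of strict convexity) is correct and matches the standard Pesin--Weiss treatment.
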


\begin{Rmk}
Given $q \in (-\infty, \infty),$ define $\phi_q$ on $\Sigma_{A}^{+}$ the one parameter family of functions by
$$\phi_q(w)=-T(q) \log |a(\chi(w))|+q\log \psi(w).$$ The function $T(q)$ is chosen such that $P(\phi_q)=0.$ Moreover, for any $q>1$,
$$\frac{T(q)}{1-q}=HP_\mu(q).$$
\end{Rmk}

Note that $\mu$ is exact dimensional (one can see \cite{Pesin1} for more details).

Under this context, if we consider a conformal repeller and an equilibrium state of a H\"older potential $\zeta$ we obtain a version of our principal result,
somewhat more concrete:

\begin{enumerate}
  \item in this setting we can compute the exponential rate for the dimension $\underline{\psi}$, using thermodynamic formalism;
  \item we can also estimate the exponential rate for fast return times $\underline{\varphi}$, using a technique similar to the one used to prove exponential return time statistics.
\end{enumerate}

Thus, applying our main result to this setting will give us the following theorem.

\begin{Th}\label{versionconfrep} Let $(J,g)$ be a conformal repeller and $\mu$ an equilibrium state for a H\"older potential $\zeta$. For any $\epsilon>0,$ we have:
\begin{eqnarray*}\label{eq1}
\underset{r \rightarrow 0}{\underline{\lim}} \frac{1}{\log r} \log \mu\lt(\tau_r \geq r^{-d_{\mu}- \epsilon}\rt) &\geq& g_1(\epsilon) \\
 \label{eq2}
\underset{r \rightarrow 0}{\underline{\lim}} \frac{1}{\log r} \log \mu\lt(\tau_r \leq r^{-d_{\mu}+ \epsilon}\rt) &\geq& g_2(\epsilon),
\end{eqnarray*}
where $$g_1(\epsilon)=\ds\max_{\gamma \in (0,1)}\min\lt\{(1-\gamma)\epsilon, \Lambda^{*}(-d_{\mu}-\gamma \epsilon)\rt\} >0$$ and $$g_2(\epsilon)=\ds\max_{\substack {\gamma \in (0,1) \\ \epsilon'>0 \\ \epsilon''>0}}\min \lt\{-\gamma\epsilon-\epsilon''+ \min\{d_2, \epsilon- \epsilon'\}, \Lambda^{*}(-d_{\mu}-\gamma\epsilon),\min\{a_0,\Lambda^{*}(-d_{\mu}+\epsilon')\}, \Lambda^{*}(-d_{\mu}+\epsilon'')\rt\}>0$$ with $\Lambda^{*}(x)=-x + T^{*}(x)=-x + \displaystyle\sup_{\lambda\in \rds}\{\lambda x- T(\lambda)\}$ and $a_0$, $d_2$ are some constants.
\end{Th}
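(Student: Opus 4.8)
The plan is to derive this theorem from the general bounds \eqref{f1}--\eqref{f2} of Theorem~\ref{principalth}, which applies because the equilibrium state $\mu=\mu_\zeta$ is exact dimensional. The heart of the matter is to estimate, for the conformal repeller, the two rate functions occurring there. Concretely, I will show that $\underline\psi(\gamma\epsilon)\ge\Lambda^*(-d_\mu-\gamma\epsilon)$ and $\underline\psi(-\epsilon'')\ge\Lambda^*(-d_\mu+\epsilon'')$ for all $\gamma,\epsilon''>0$, and, writing $\tilde a:=\min\{d_2,\epsilon-\epsilon'\}$, that $\underline\varphi(\tilde a,\epsilon)\ge\min\{a_0,\Lambda^*(-d_\mu+\epsilon')\}$ for all $\epsilon'\in(0,\epsilon)$, where $a_0,d_2>0$ are the constants furnished by the return-time statistics below. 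Granting this, $g_1(\epsilon)$ is immediate from \eqref{f1}, and $g_2(\epsilon)$ follows from \eqref{f2} by taking the free parameter $a$ of \eqref{f2} to be $\tilde a$ and maximizing over $\gamma\in(0,1)$ and $\epsilon',\epsilon''>0$ (terms with $\epsilon'\ge\epsilon$ give a nonpositive first entry and are irrelevant). Positivity then follows because, as shown below, $\Lambda^*$ vanishes only at $-d_\mu$: for $\gamma\in(0,1)$ both $(1-\gamma)\epsilon$ and $\Lambda^*(-d_\mu-\gamma\epsilon)$ are positive, giving $g_1(\epsilon)>0$; and taking $\epsilon',\epsilon''$ small enough that $-\gamma\epsilon-\epsilon''+\tilde a>0$ (possible since $\tilde a\to\min\{d_2,\epsilon\}>0$ as $\epsilon'\to0$), together with $a_0,\Lambda^*(-d_\mu+\epsilon'),\Lambda^*(-d_\mu+\epsilon'')>0$, gives $g_2(\epsilon)>0$.

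To estimate the dimension rate $\underline\psi$ I would use the HP-spectrum $T(q)$ of Proposition~\ref{hpspec}. Write $n=-\log r$ and $Y_r(x)=\log\mu(B(x,r))/\log r$, so that $e^{nY_r(x)}=\mu(B(x,r))^{-1}$; then, for every $\theta\in\rds$ and with $q=1-\theta$, the limit \eqref{specdim} reads
$$\frac1n\log\int_J e^{n\theta Y_r(x)}\,d\mu(x)=\frac1n\log\int_J\mu(B(x,r))^{q-1}\,d\mu(x)\ \longrightarrow\ T(1-\theta)\qquad(r\to0).$$
Since $T$ is real analytic, hence finite on all of $\rds$, the G\"artner--Ellis theorem provides a large deviation principle for the family $(Y_r)_r$, at speed $n$, with good convex rate function $y\mapsto\sup_{\theta\in\rds}\{\theta y-T(1-\theta)\}=y+T^*(-y)=\Lambda^*(-y)$. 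As $\mu$ is exact dimensional, $Y_r\to d_\mu$ $\mu$-almost everywhere, so this rate function vanishes at $y=d_\mu$ (an almost-sure limit must be a zero of the rate function); being convex, it is non-increasing on $(-\infty,d_\mu]$, non-decreasing on $[d_\mu,\infty)$, and strictly positive away from $d_\mu$---by strict convexity of $T$ in the generic case, and because $T^*$ is then finite only at $-d_\mu$ in the degenerate case where $\mu$ has maximal dimension and $T$ is affine (Proposition~\ref{hpspec}). Since $\log\mu(B(x,r))/(-\log r)=-Y_r$, the event $\{-Y_r\in I_{\gamma\epsilon}\}$ equals $\{Y_r>d_\mu+\gamma\epsilon\}$ and $\{-Y_r\in I_{-\epsilon''}\}$ equals $\{Y_r<d_\mu-\epsilon''\}$; applying the large deviation upper bound to these half-lines and using the monotonicity just noted gives exactly $\underline\psi(\gamma\epsilon)\ge\Lambda^*(-d_\mu-\gamma\epsilon)$ and $\underline\psi(-\epsilon'')\ge\Lambda^*(-d_\mu+\epsilon'')$.

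For the fast-return rate $\underline\varphi$ the plan is to establish, for $\mu$ on the repeller, conditions of type E1--E2 and then invoke Proposition~\ref{Henon}. Since the potential $\varphi=\zeta\circ\chi$ is H\"older, the Gibbs measure $\nu=\nu_\varphi$ on $\Sigma^{+}_{A}$ is exponentially $\psi$-mixing; coding each ball $B(x_0,2r)$ by a finite union of cylinders $\jcal_n(\cdot)$, bounding the resulting discretization error, and discarding the centers $x_0$ whose ball returns too quickly under $g$, one obtains constants $a_0,d_2>0$ and, for every small $r$, a set $\Omega_r$ with $\mu(\Omega_r^c)<r^{a_0}$ on which
$$\sup_{t\ge0}\lt|\mu_{B(x_0,2r)}\lt(\tau_{B(x_0,2r)}>\frac{t}{\mu(B(x_0,2r))}\rt)-e^{-t}\rt|\le r^{d_2}.$$
This is E1 with exponent $b=a_0$ together with E2 with exponent $d_2$; since $\tilde a=\min\{d_2,\epsilon-\epsilon'\}\le d_2$, E2 holds a fortiori with the smaller exponent $\tilde a$, and Proposition~\ref{Henon} then gives $\underline\varphi(\tilde a,\epsilon)\ge\min\{a_0,\underline\psi(\tilde a-\epsilon)\}$. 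Now $\tilde a-\epsilon=-\max\{\epsilon-d_2,\epsilon'\}$ with $\max\{\epsilon-d_2,\epsilon'\}\ge\epsilon'>0$, so the estimate $\underline\psi(-s)\ge\Lambda^*(-d_\mu+s)$ from the previous paragraph, combined with the monotonicity of $\Lambda^*(-d_\mu+\,\cdot\,)$ on $(0,\infty)$, yields $\underline\psi(\tilde a-\epsilon)\ge\Lambda^*(-d_\mu+\epsilon')$, hence $\underline\varphi(\tilde a,\epsilon)\ge\min\{a_0,\Lambda^*(-d_\mu+\epsilon')\}$, as claimed.

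The main obstacle is the construction of $\Omega_r$ and the accompanying exponential return-time statistics for balls. Exponential return statistics for cylinders under a H\"older Gibbs measure are classical, but here one needs the statement for the metric balls $B(x_0,2r)$, which are not dynamically defined, with explicit control of both the approximation exponent $d_2$ and the measure $r^{a_0}$ of the exceptional set of centers. The delicate points are: approximating balls by unions of cylinders and estimating the error uniformly in $x_0$; dominating, again uniformly in $x_0$, the contribution of the short, self-overlapping returns of the preimages of $B(x_0,2r)$---which is where the pointwise-dimension and large deviation information on $\mu$ from the second paragraph re-enters, so as to discard only a set of centers of small, controlled measure; and tracking how the exponential mixing rate and these two sources of error combine into the final exponents $a_0$ and $d_2$.
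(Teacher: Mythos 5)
Your reduction of the theorem to estimating the two rate functions $\underline\psi$ and $\underline\varphi$ and feeding them into Theorem~\ref{principalth} is exactly the paper's plan, and the treatment of $\underline\psi$ via G\"artner--Ellis and the HP-spectrum $T(q)$ is essentially Proposition~\ref{ratedimrep} (the paper gets equality, you get the inequality you need, which is fine). The positivity argument via the strict convexity of $T$ (or $\Lambda^*\equiv+\infty$ off $-d_\mu$ in the maximal-dimension case) also matches the paper.

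The gap is in the estimate for $\underline\varphi$. You reduce it to establishing the full exponential-law approximation E1--E2 for metric balls on the repeller --- a two-sided uniform bound $\sup_{t\ge0}|\mu_{B(x_0,r)}(\tau_{B(x_0,r)}>t/\mu(B(x_0,r)))-e^{-t}|\le r^{d_2}$ off a set of centers of measure $r^{a_0}$ --- and then invoke Proposition~\ref{Henon}. But you never prove E1--E2; you explicitly flag it as ``the main obstacle'' and list the delicate points without resolving them. This is a genuine hole, and it is not what the paper does. The paper's Proposition~\ref{ratefastconfrep} is derived from Lemma~\ref{omegameasure2}, which proves only the one-sided upper bound actually needed, $\mu_{B(x_0,2r)}(\tau_{B(x_0,2r)}\le r^{-d_0})\le(c_4-c_5\log r)\,\mu(B(x_0,c_3r^{d_1}))+r^{-d_0}\mu(B(x_0,3r))$ for $x_0\in\Omega_r$, by a direct argument: the closing lemma and periodic-point counting (with the dimension rate $\underline\psi(-\xi)$ controlling the exceptional set) exclude short returns $k\le c_0\log\frac1{2r}$, and for larger $k$ the Ruelle--Perron--Frobenius operator $\mathcal{L}_\zeta^k\mathds{1}_{B(x_0,2r)}$ is bounded via the Gibbs property together with the exponential contraction of cylinders. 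This sidesteps the need for any exponential-law approximation for balls, which is strictly harder than the one-sided estimate and would require its own careful proof (or a precise citation, which the paper does not supply because it does not need it). To complete your proposal you would either need to carry out the E1--E2 construction in full, or switch to the paper's more economical transfer-operator argument.
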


\begin{Rmk}
If $\mu$ is the measure of maximal dimension the above theorem remains valid. However, since $HP_{\mu}(q)$ is constant and equal to $d_{\mu}$, it follows that
$\Lambda^{*}(x)= +\infty$ for $x \neq 0,$ which makes $g_1(\epsilon)=\ds\max_{\gamma \in (0,1)}(1-\gamma)\epsilon$ and $g_2(\epsilon)=\ds\max_{\substack{\gamma \in (0,1) \\ \epsilon'>0 \\ \epsilon''>0}}\min\{-\gamma\epsilon-\epsilon''+\min\{d_2, \epsilon-\epsilon'\},a_0\}.$
\end{Rmk}

\begin{Prop}\label{casenotmaxdim}
Suppose that $\mu$ is not the measure of maximal dimension. Then for any $\kappa<1$ and $\epsilon$ sufficiently small, $g_1 \geq \kappa c\epsilon^2$ and
$g_2 \geq \kappa c\lt(\dfrac{\epsilon}{3}\rt)^2,$ with $c=\dfrac{1}{2}(\Lambda^{*})''(-d_{\mu})=\dfrac{\lambda_{\mu}}{\sigma^2_{\mu}},$
where $\lambda_{\mu}$ is the Lyapunov exponent of $g$ and $\sigma^2_{\mu}$ the variance of $\log \psi+ \log |a|$ with respect to $\mu.$
\end{Prop}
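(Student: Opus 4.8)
The plan is to reduce everything to the local behaviour of the single rate function $\Lambda^*$ near its minimum. \emph{Step 1: local structure of $\Lambda^*$ at $-d_\mu$.} By Proposition~\ref{hpspec} the function $T$ is real analytic, and since $\mu$ is \emph{not} the measure of maximal dimension we have $T''(1)>0$; hence $T''>0$ on a neighbourhood of $1$, so $T'$ is a local analytic diffeomorphism there and the Legendre transform $T^*(x)=\lambda(x)x-T(\lambda(x))$, with $\lambda(x)=(T')^{-1}(x)$, is real analytic near $x_0:=T'(1)$. Differentiating $P(\phi_q)=0$ at $q=1$ (where $\phi_1=\log\psi$ has equilibrium state $\nu$) gives $-T'(1)\lambda_\mu+\int\log\psi\,d\mu=0$, so (using $\int\log\psi\,d\mu=-h_\mu$ and $d_\mu=h_\mu/\lambda_\mu$) $T'(1)=-d_\mu$ and $x_0=-d_\mu$. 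Since the supremum defining $T^*(-d_\mu)$ is attained at $\lambda=1$, one gets $\Lambda^*(-d_\mu)=d_\mu+T^*(-d_\mu)=d_\mu+(-d_\mu)=0$, $(\Lambda^*)'(-d_\mu)=-1+\lambda(-d_\mu)=-1+1=0$, and $(\Lambda^*)''(-d_\mu)=\lambda'(-d_\mu)=1/T''(1)=2c>0$. A second differentiation of $P(\phi_q)=0$ at $q=1$, using Ruelle's formula for the second derivative of topological pressure, identifies $T''(1)$ with $\sigma_\mu^2/\lambda_\mu$, i.e. $c=\lambda_\mu/\sigma_\mu^2$. The only consequence used below is the Taylor expansion $\Lambda^*(-d_\mu+h)=ch^2+O(h^3)$ as $h\to0$.

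\emph{Step 2: the bound for $g_1$.} Fix $\kappa<1$ and pick $\gamma_0\in(\sqrt\kappa,1)$. For $\epsilon$ small, $\Lambda^*(-d_\mu-\gamma_0\epsilon)=c\gamma_0^2\epsilon^2(1+O(\epsilon))$ is $<(1-\gamma_0)\epsilon$ (quadratic versus linear), so the inner minimum in the definition of $g_1$ evaluated at $\gamma=\gamma_0$ equals $\Lambda^*(-d_\mu-\gamma_0\epsilon)$, which is $\ge\kappa c\epsilon^2$ because $\gamma_0^2>\kappa$ and $\epsilon$ is small. Since $g_1(\epsilon)$ is the supremum over $\gamma\in(0,1)$, this yields $g_1(\epsilon)\ge\kappa c\epsilon^2$.

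\emph{Step 3: the bound for $g_2$.} Given $\kappa<1$, choose $\eta\in(0,1-\sqrt\kappa)$ and set $\gamma=(1-\eta)/3\in(0,1)$ and $\epsilon'=\epsilon''=(1-\eta)\epsilon/3>0$. For $\epsilon$ small one has $\epsilon-\epsilon'=(2+\eta)\epsilon/3<d_2$ and $\Lambda^*(-d_\mu+\epsilon')<a_0$, so the two truncations in the definition of $g_2$ are inactive; the first slot then equals $-\gamma\epsilon-\epsilon''+(\epsilon-\epsilon')=\eta\epsilon$, while the remaining three slots are $\Lambda^*(-d_\mu-\gamma\epsilon)$, $\Lambda^*(-d_\mu+\epsilon')$ and $\Lambda^*(-d_\mu+\epsilon'')$, each equal to $c(1-\eta)^2(\epsilon/3)^2(1+O(\epsilon))$ by Step 1. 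For $\epsilon$ small the three quadratic slots lie below $\eta\epsilon$ and are each $\ge\kappa c(\epsilon/3)^2$ since $(1-\eta)^2>\kappa$; hence the minimum of the four slots is $\ge\kappa c(\epsilon/3)^2$, and therefore $g_2(\epsilon)\ge\kappa c(\epsilon/3)^2$.

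\emph{Expected main difficulty.} Nothing here is deep: Step 1 is a Taylor expansion backed by Proposition~\ref{hpspec}, and Steps 2--3 are elementary optimisations. The only point requiring care is the allocation of the three free parameters in $g_2$: one must keep the balance term $-\gamma\epsilon-\epsilon''+\min\{d_2,\epsilon-\epsilon'\}$ positive while keeping the three quadratic contributions $\Lambda^*(-d_\mu-\gamma\epsilon)$, $\Lambda^*(-d_\mu+\epsilon')$, $\Lambda^*(-d_\mu+\epsilon'')$ as large as possible, and check that the truncations by $d_2$ and $a_0$ are indeed inactive for small $\epsilon$ — the symmetric choice $\gamma\epsilon\approx\epsilon'\approx\epsilon''\approx\epsilon/3$ is exactly what forces the factor $(\epsilon/3)^2$ and explains why $g_2$ loses a factor $9$ relative to $g_1$. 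Pinning down the precise identity $c=\lambda_\mu/\sigma_\mu^2$ (rather than merely $c=\tfrac12(\Lambda^*)''(-d_\mu)>0$) also needs the second-derivative-of-pressure computation, but this is routine thermodynamic formalism.
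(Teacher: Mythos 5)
Your proposal is correct and follows essentially the same route as the paper: compute $(\Lambda^*)''(-d_\mu)=1/T''(1)$ via Legendre duality, identify $T''(1)=\sigma_\mu^2/\lambda_\mu$ (the paper cites Lemma~5 of \cite{Pesin} where you re-derive it by differentiating $P(\phi_q)=0$ twice), and then optimize the parameters, with $\gamma_0$ near $1$ for $g_1$ and $\gamma\epsilon\approx\epsilon'\approx\epsilon''\approx\epsilon/3$ for $g_2$ — exactly the paper's choice $\epsilon'=\epsilon''=\gamma\epsilon=\epsilon/\eta$ with $\eta$ slightly above $3$. The only cosmetic difference is that you spell out the pressure-derivative computation and verify explicitly that $\Lambda^*(-d_\mu)=(\Lambda^*)'(-d_\mu)=0$, both of which are implicit in the paper.
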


The proof of this proposition will be done at the end of this section.

To obtain Theorem \ref{versionconfrep}, we need a fundamental theorem of large deviation theory, the Gartner-Ellis Theorem.

Let $\mu_r$ be a family of probability measures.
Consider a family $Z_r \in \rds, r \in (0,1)$ where $Z_r$ possesses the law $\mu_r$ and logarithmic moment generating function
$$\Lambda_r(\lambda)=\log \mathbb{E}\lt[e^{\lambda Z_r}\rt].$$
$\mu_r$ may satisfy the large deviation property if there exists a limit of properly scaled logarithmic moment generating functions.

\begin{Ass}\label{limlamr}
For any $\lambda \in \rds,$ the logarithmic moment generating function, defined as the limit
$$\Ll:=\lim_{n\rightarrow \infty}\frac{1}{-\log r}\Lambda_r(-\lambda\log r ),$$
exists as an extended real number. Further, the origin belongs to the interior of the interval $D_{\lambda}:=\{\lambda \in \rds; \Ll < \infty\},$
$\Lambda$ is $C^{2}$ and strictly convex.
\end{Ass}

The Fenchel-Legendre transform of $\Lambda(\lambda)$ is

$$\Lambda^{*}(x)=\sup_{\lambda \in \mathds{R}}\{\lambda x - \Lambda(\lambda)\}.$$

\begin{Rmk}
  $\Lambda^{*}$ is strictly convex and $C^1$ on its support.
\end{Rmk}
Thus, we can enunciate Gartner-Ellis Theorem (see e.g. \cite{Dembo}).
\begin{Th}[Gartner-Ellis]\label{GarEl}
If assumption \ref{limlamr} hold, then for any closed set $F$,
  \begin{eqnarray}\label{limsupge}
   \underset{r \to 0}{\overline{\lim}}\frac{1}{-\log r} \log \mu_n(F) \leq - \inf_{x \in F} \Lxx.
  \end{eqnarray}
and for any open set $G$,
   \begin{eqnarray}\label{liminfge}
  \underset{r \to 0}{\underline{\lim}}\frac{1}{-\log r}\log \mu_n(G) \geq - \inf_{x \in G } \Lxx.
  \end{eqnarray}
\end{Th}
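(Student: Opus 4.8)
The statement is the classical Gartner--Ellis theorem written for the continuous parameter $r\to0$: setting $n:=-\log r\to+\infty$, it is an ordinary large deviation principle at speed $n$ for the laws $\mu_r$ of $Z_r$ with rate function $\Lambda^*$, so the plan is to follow the standard route (as in \cite{Dembo}) in three stages: the exponential upper bound on half-lines, its propagation to compact and then to closed sets, and a change-of-measure argument for the lower bound on open sets.

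\emph{Upper bound.} Write $m=\Lambda'(0)$, which is well defined since $\Lambda$ is $C^2$ near the origin, and note $\Lambda(0)=0$. For $a>m$ and $\lambda>0$, the exponential Markov inequality gives
$$\mu_r\bigl(\{Z_r\ge a\}\bigr)\le e^{-\lambda n a}\,\E\bigl[e^{\lambda n Z_r}\bigr]=\exp\bigl(-\lambda n a+\Lambda_r(-\lambda\log r)\bigr),$$
so Assumption~\ref{limlamr} yields $\limsup_{r\to0}\frac1n\log\mu_r(\{Z_r\ge a\})\le\Lambda(\lambda)-\lambda a$; taking the infimum over $\lambda>0$ produces $-\Lambda^*(a)$, because for $a>m$ the supremum defining $\Lambda^*(a)$ is attained on $[0,\infty)$ by convexity. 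Symmetrically $\limsup\frac1n\log\mu_r(\{Z_r\le a\})\le-\Lambda^*(a)$ for $a<m$. For a compact set $K$ and $\delta>0$ I would cover $K$ by finitely many bounded intervals on each of which one of these two half-line bounds applies with value $\ge-(\inf_K\Lambda^*-\delta)$, using continuity of $\Lambda^*$ on its domain; subadditivity of $\mu_r$ and $\delta\to0$ then give $\limsup\frac1n\log\mu_r(K)\le-\inf_{x\in K}\Lambda^*(x)$. To pass from compact to closed $F$ I would use exponential tightness: since $0\in\mathrm{int}\,D_\lambda$ there exist $\lambda_1>0>\lambda_2$ in $D_\lambda$, and the half-line bounds with these exponents show $\limsup\frac1n\log\mu_r(\{|Z_r|>M\})\to-\infty$ as $M\to\infty$; writing $F=(F\cap[-M,M])\cup(F\setminus[-M,M])$ and letting $M\to\infty$ gives \eqref{limsupge}.

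\emph{Lower bound.} It suffices to show that for every $y=\Lambda'(\eta)$ with $\eta\in\mathrm{int}\,D_\lambda$ and every $\delta>0$,
$$\liminf_{r\to0}\frac1n\log\mu_r\bigl(\{|Z_r-y|<\delta\}\bigr)\ge-\Lambda^*(y);$$
indeed, by the $C^2$ and strict convexity hypotheses $\Lambda'$ is an increasing homeomorphism of $\mathrm{int}\,D_\lambda$ onto an open interval, so such points $y$ exhaust a dense subset of the interior of the domain of $\Lambda^*$, on which $\Lambda^*$ is continuous, and this gives the bound at every $x$ in an open set $G$. For the local estimate I would introduce the tilted probability measures $\tilde\mu_r$ defined by $d\tilde\mu_r/d\mu_r(z)=\exp\bigl(\eta n z-\Lambda_r(-\eta\log r)\bigr)$. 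Then on $\{|z-y|<\delta\}$ one has $-\eta n z\ge-\eta n y-|\eta|n\delta$, hence
$$\mu_r\bigl(\{|Z_r-y|<\delta\}\bigr)\ge\exp\bigl(-\eta n y-|\eta|n\delta+\Lambda_r(-\eta\log r)\bigr)\,\tilde\mu_r\bigl(\{|Z_r-y|<\delta\}\bigr),$$
and since $\frac1n\Lambda_r(-\eta\log r)\to\Lambda(\eta)$ the claim reduces to $\tilde\mu_r(\{|Z_r-y|<\delta\})\to1$. The scaled log-moment generating function of $\tilde\mu_r$ converges to $\tilde\Lambda(\lambda)=\Lambda(\lambda+\eta)-\Lambda(\eta)$, which again satisfies Assumption~\ref{limlamr} and has $\tilde\Lambda'(0)=\Lambda'(\eta)=y$; hence $\tilde\Lambda^*$ is strictly convex with a unique zero at $y$, so applying the already proved upper bound to the family $\tilde\mu_r$ and the closed set $\{|z-y|\ge\delta\}$, on which $\inf\tilde\Lambda^*=\min\bigl(\tilde\Lambda^*(y-\delta),\tilde\Lambda^*(y+\delta)\bigr)>0$ by convexity, gives $\tilde\mu_r(\{|Z_r-y|\ge\delta\})\to0$. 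Letting $\delta\to0$ yields \eqref{liminfge}.

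\emph{Where the difficulty lies.} The delicate part is the lower bound. The tilting step is a bootstrap: it invokes the upper bound for the \emph{tilted} family $\tilde\mu_r$, so one must verify that $\tilde\Lambda$ inherits all the requirements of Assumption~\ref{limlamr}, and this is precisely where it matters that $0$ lies in the \emph{interior} of $D_\lambda$, so that $0$ stays interior after translating by $\eta$. One must also check that the smoothness and strict convexity of $\Lambda$ genuinely produce enough exposed points of $\Lambda^*$ to reach an arbitrary open set. The upper bound, by contrast, is the routine Chebyshev-and-covering argument. Since this is the textbook Gartner--Ellis theorem, I would present only the sketch above and refer to \cite{Dembo} for the remaining standard verifications.
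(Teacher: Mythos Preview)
Your sketch is a correct outline of the standard textbook proof of the Gartner--Ellis theorem, but the paper does not prove this statement at all: it is stated as a classical result and the reader is referred to \cite{Dembo}. So there is nothing to compare --- the paper's ``proof'' is simply a citation, and your final sentence (``refer to \cite{Dembo} for the remaining standard verifications'') is in fact exactly what the paper does in its entirety.
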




We will apply this theorem to the family $Z_r$ defined by
$$Z_r=\frac{\log \mu(B(x,r))}{-\log r}.$$
It follows that
$$\Lambda_r(\lambda)=\log \int e^{\lambda \frac{\log \mu(B(x,r))}{-\log r}} d\mu(x).$$
Thus, from the definition of $\Lambda(\lambda),$ we get
\begin{eqnarray*}
\Lambda(\lambda) &=& \lim_{r \to 0} \frac{1}{-\log r}\log \int e^{\lambda \log \mu(B(x,r))}d\mu(x) \\
                 &=& \lim_{r \to 0} \frac{1}{-\log r}\log \int \mu(B(x,r))^{\lambda}d\mu(x).
\end{eqnarray*}

The proof of the following proposition is an immediate consequence of the Proposition \ref{hpspec}.
\begin{Prop}\label{limint}
Let $(J,g)$ be a conformal repeller and $\mu$ an equilibrium state for the H\"older potential $\zeta$. Then, for $\lambda>0$, the following limit exists
$$\Lambda(\lambda)=\lim_{r \to 0}\frac{1}{-\log r} \log \int \mu(B(x,r))^\lambda d\mu(x)=T(\lambda+1).$$
\end{Prop}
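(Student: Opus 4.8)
The plan is to identify the integral appearing in $\Lambda(\lambda)$ with the quantity whose logarithmic asymptotics are controlled by Proposition~\ref{hpspec}. Recall that for $q \in \rds$, Proposition~\ref{hpspec} gives the existence of
$$T(q)=\lim_{r \to 0} \frac{\log \int_J \mu(B(x,r))^{q-1}\,d\mu(x)}{-\log r}.$$
Setting $q = \lambda+1$, this reads exactly
$$T(\lambda+1)=\lim_{r \to 0} \frac{\log \int_J \mu(B(x,r))^{\lambda}\,d\mu(x)}{-\log r}.$$
On the other hand, by the computation carried out in the excerpt just before the statement,
$$\Lambda(\lambda) = \lim_{r \to 0} \frac{1}{-\log r}\log \int \mu(B(x,r))^{\lambda}\,d\mu(x),$$
whenever this limit exists. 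So the two right-hand sides coincide term by term along $r \to 0$, and the limit on the right exists by Proposition~\ref{hpspec}; hence $\Lambda(\lambda)$ exists and equals $T(\lambda+1)$. This is the whole content of the proof: it is a direct substitution $q \mapsto \lambda+1$ in the already-established HP-spectrum limit.

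The only points requiring a word of care are measurability and finiteness of the integrand. The function $x \mapsto \mu(B(x,r))$ is measurable (indeed, for a Borel probability measure on a compact metric space it is upper semicontinuous in $x$ for fixed $r$, up to the usual ball-boundary subtleties, and in any case Proposition~\ref{hpspec} already presupposes the integral $\int_J \mu(B(x,r))^{q-1}\,d\mu(x)$ is well defined), so $\mu(B(x,r))^{\lambda}$ is measurable. For $\lambda > 0$ one has $0 \le \mu(B(x,r))^{\lambda} \le 1$, so the integral is finite and bounded by $1$; this also shows $\Lambda(\lambda) \ge 0$ for $\lambda>0$ is consistent with $T'(q)\le 0$ and $T(1)=0$ from Proposition~\ref{hpspec}. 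There is genuinely no obstacle here: the statement is a restatement of Proposition~\ref{hpspec} after the change of variable $q=\lambda+1$, restricted to $\lambda>0$ so that the power is a contraction and all quantities are manifestly finite. The main (trivial) ``obstacle'' is simply to notice that the integral $\int e^{\lambda\frac{\log\mu(B(x,r))}{-\log r}}d\mu$ in the definition of $\Lambda_r$ simplifies, for $r<1$, to $\int \mu(B(x,r))^{\lambda}\,d\mu$, which is precisely the $q=\lambda+1$ instance of the HP-integral; once this identification is made, Proposition~\ref{hpspec} delivers the conclusion immediately.
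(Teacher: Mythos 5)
Your proof is correct and takes exactly the same route as the paper, which simply remarks that the proposition is an immediate consequence of Proposition~\ref{hpspec} via the substitution $q=\lambda+1$. The additional remarks on measurability and boundedness of $\mu(B(x,r))^\lambda$ for $\lambda>0$ are harmless but not needed.
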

Applying Gartner-Ellis Theorem, we obtain:
\begin{Cor}\label{corldp}
Under the same conditions as in Proposition \ref{limint} we have that for all interval $I,$

$$\underset{r \rightarrow 0}{\lim}\frac{1}{-\log r} \log \mu\left(\left\{\frac{\log \mu(B(x,r))}{-\log r} \in I \right\}\right) = - \inf_{x \in I} \Lambda^{*}(x),$$

where $\Lambda^{*}(x)=-x + T^{*}(x)$ is continuous on its domain.
\end{Cor}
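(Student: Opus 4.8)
The plan is to apply the Gärtner-Ellis theorem (Theorem~\ref{GarEl}) to the family $\mu_r$ of laws of $Z_r=\dfrac{\log\mu(B(x,r))}{-\log r}$ under $\mu$, and then read off both assertions of the corollary. The first step is to verify Assumption~\ref{limlamr}. Since Proposition~\ref{hpspec} asserts that the limit \eqref{specdim} exists and is finite (indeed real analytic) for \emph{every} $q\in\rds$, the computation carried out just before Proposition~\ref{limint} is valid for all $\lambda\in\rds$, not only $\lambda>0$, and gives
\[
\Lambda(\lambda)=\lim_{r\to0}\frac{1}{-\log r}\log\int\mu(B(x,r))^{\lambda}d\mu(x)=T(\lambda+1),\qquad\lambda\in\rds .
\]
In particular $D_{\lambda}=\rds$, so the origin is an interior point, $\Lambda(0)=T(1)=0$, the map $\Lambda$ inherits $C^{2}$ (in fact real-analytic) regularity from $T$, and $\Lambda$ is strictly convex exactly when $T''>0$, which by Proposition~\ref{hpspec} holds whenever $\mu$ is not the measure of maximal dimension; this is the case I would treat here, the degenerate one being recorded in the Remark following Theorem~\ref{versionconfrep}. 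Thus Assumption~\ref{limlamr} is satisfied.

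Next I would identify the rate function. From $\Lambda(\lambda)=T(\lambda+1)$ and the substitution $q=\lambda+1$,
\[
\Lambda^{*}(x)=\sup_{\lambda\in\rds}\{\lambda x-T(\lambda+1)\}=\sup_{q\in\rds}\{(q-1)x-T(q)\}=-x+\sup_{q\in\rds}\{qx-T(q)\}=-x+T^{*}(x),
\]
which is the announced formula. Being a Fenchel-Legendre transform, $\Lambda^{*}$ is a closed (lower semicontinuous) convex function on $\rds$, hence it is finite and continuous on the interior of its effective domain and, by closedness and convexity, coincides with its one-sided limits at the endpoints of that domain; this gives the continuity statement of the corollary, consistently with the Remark preceding Theorem~\ref{GarEl}.

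Finally I would run the two bounds of Gärtner-Ellis on the interval $I$: the upper bound \eqref{limsupge} applied to the closed set $\overline I$, and the lower bound \eqref{liminfge} applied to the open set $I^{\circ}$, together with $I^{\circ}\subseteq I\subseteq\overline I$, yield
\[
-\inf_{x\in I^{\circ}}\Lambda^{*}(x)\ \le\ \underset{r\to0}{\underline{\lim}}\ \frac{\log\mu_r(I)}{-\log r}\ \le\ \underset{r\to0}{\overline{\lim}}\ \frac{\log\mu_r(I)}{-\log r}\ \le\ -\inf_{x\in\overline I}\Lambda^{*}(x).
\]
To close the gap, I would observe that $I^{\circ}$, $I$ and $\overline I$ differ only by the (at most two) endpoints of $I$, so by continuity of $\Lambda^{*}$ along the interval, $\inf_{I^{\circ}}\Lambda^{*}=\inf_{I}\Lambda^{*}=\inf_{\overline I}\Lambda^{*}$ (for a nondegenerate interval $I$); the sandwich then forces equality throughout, the limit exists, and it equals $-\inf_{x\in I}\Lambda^{*}(x)$, which is the claim. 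I expect the genuine work here to be not the formal invocation of Gärtner-Ellis but its two analytic inputs: (i) the existence and finiteness of $\Lambda(\lambda)$ for $\lambda\le0$, where $\mu(B(x,r))^{\lambda}\ge1$ and the partition integral could a priori diverge — exactly what Proposition~\ref{hpspec} delivers on a conformal repeller; and (ii) controlling the endpoint behaviour of $\Lambda^{*}$ so that the infima over $I$, $I^{\circ}$ and $\overline I$ really agree, which is precisely where the continuity assertion in the statement is used.
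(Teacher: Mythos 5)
Your proof follows the same route as the paper's — compute $\Lambda(\lambda)=T(\lambda+1)$, read off the Fenchel–Legendre transform $\Lambda^{*}(x)=-x+T^{*}(x)$ by the shift $q=\lambda+1$, and invoke Gärtner–Ellis — but you explicitly fill in the two steps the paper's one-line appeal to ``direct consequence of Theorem~\ref{GarEl}'' silently assumes: the verification of Assumption~\ref{limlamr} (extending the moment-generating-function limit to $\lambda\le 0$ via Proposition~\ref{hpspec}, and flagging that strict convexity fails for the measure of maximal dimension, a degenerate case the paper relegates to a later Remark) and the sandwich between $-\inf_{I^{\circ}}\Lambda^{*}$ and $-\inf_{\overline I}\Lambda^{*}$, which is closed by continuity of $\Lambda^{*}$ only for a nondegenerate interval. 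Those additions are correct and genuinely needed for the statement as written, so your argument is a more complete version of the same proof rather than a different one.
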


\begin{proof}
This equality is a direct consequence of the Theorem \ref{GarEl}.
Since the logarithmic moment generating function is defined by $\Lambda(\lambda)=T(\lambda+1)$, the Fenchel-Legendre transform of $\Lambda(\lambda)$ is
\begin{eqnarray*}
 \Lambda^{*}(x) &=& \sup_{\lambda \in \rds} \{\lambda x - \Lambda(\lambda)\}  \\
   &=& \sup_{\lambda \in \rds} \lt\{\lambda x - T(\lambda +1)\rt\}\\
   &=& \sup_{\nu \in \rds} \lt\{(\nu-1)x - T(\nu)\rt\} \\
   &=& -x + \sup_{\nu \in \rds} \lt\{\nu x - T(\nu)\rt\} \\
   &=& -x + T^{*}(x).
\end{eqnarray*}
The continuity of $\Lambda^{*}(x)$ follows from its convexity.
\end{proof}

In Figure \ref{fltlambda}, one can see a graph of the Fenchel-Legendre transform of $\Lambda$.

\begin{figure}[!htb]
  \centering
\includegraphics[width=13cm]{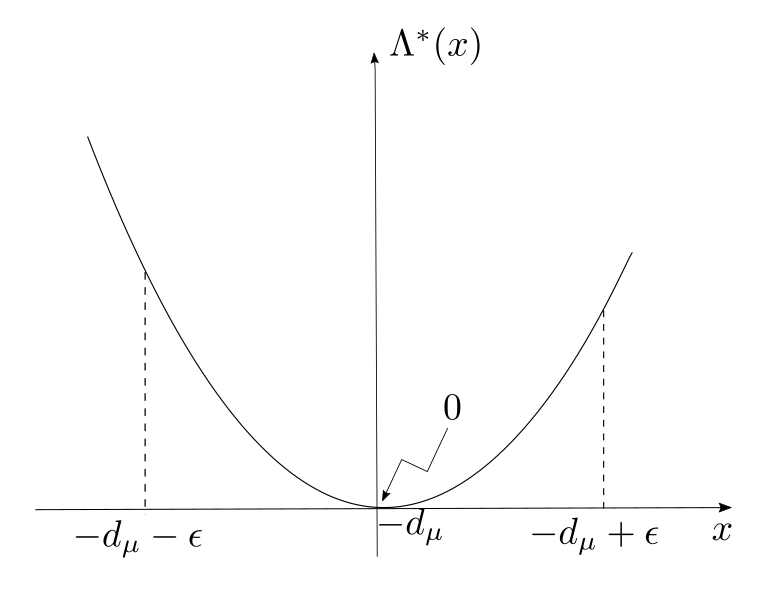}
\caption{Graph of $\Lambda^{*}.$}
\label{fltlambda}
\end{figure}

Now, one can use Corollary \ref{corldp}, to get the rate function for the dimension:
\begin{Prop}\label{ratedimrep} For any $\epsilon>0$, the exponential rate for the dimension is given by:
$$\underline{\psi}(\epsilon)= \inf_{x \in (-\infty, -d_{\mu}-\epsilon)} \Lambda^{*}(x)=\Lambda^{*}(-d_{\mu}-\epsilon)>0$$
and
$$\underline{\psi}(-\epsilon)= \inf_{x \in (-d_{\mu}+\epsilon,+\infty)} \Lambda^{*}(x)=\Lambda^{*}(-d_{\mu}+\epsilon)>0.$$
\end{Prop}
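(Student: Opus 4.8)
The plan is to read the value of $\underline{\psi}(\pm\epsilon)$ off the large deviation principle already obtained in Corollary~\ref{corldp}, and then to locate the unique minimum of the rate function $\Lambda^{*}$.

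First I would apply Corollary~\ref{corldp} to the two intervals $I_{\epsilon}=(-\infty,-d_{\mu}-\epsilon)$ and $I_{-\epsilon}=(-d_{\mu}+\epsilon,+\infty)$ occurring in the definition \eqref{expratedim} of $\underline{\psi}$. Since $\tfrac{1}{\log r}=-\tfrac{1}{-\log r}$, and since Corollary~\ref{corldp} provides a genuine limit (so that the lower limit in \eqref{expratedim} is in fact a limit), this gives at once $\underline{\psi}(\epsilon)=\inf_{x\in I_{\epsilon}}\Lambda^{*}(x)$ and $\underline{\psi}(-\epsilon)=\inf_{x\in I_{-\epsilon}}\Lambda^{*}(x)$. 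It then remains to evaluate these two infima and to verify positivity.

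Next I would record the shape of $\Lambda^{*}$. Being a Fenchel--Legendre transform it is convex and lower semicontinuous; moreover $\Lambda(0)=\lim_{r\to0}\tfrac{1}{-\log r}\log\int 1\,d\mu=0$, so $\Lambda^{*}(x)=\sup_{\lambda}\{\lambda x-\Lambda(\lambda)\}\ge 0$ for all $x$, with equality exactly when $\lambda=0$ maximizes $\lambda\mapsto\lambda x-\Lambda(\lambda)$, i.e.\ when $x=\Lambda'(0)$ (here the strict convexity and $C^{1}$ regularity from Assumption~\ref{limlamr} and the subsequent Remark guarantee the maximizer, hence the minimizer of $\Lambda^{*}$, is unique). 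Thus $\Lambda^{*}$ has a single zero at $x_{0}=\Lambda'(0)$, is strictly decreasing on $(-\infty,x_{0}]$ and strictly increasing on $[x_{0},+\infty)$. To identify $x_{0}$ I would show $x_{0}=-d_{\mu}$: by Proposition~\ref{limint}, $\Lambda(\lambda)=T(\lambda+1)$ for $\lambda\ge 0$, so the right derivative of $\Lambda$ at $0$ is $T'(1)$, and differentiability of $\Lambda$ at $0$ forces $\Lambda'(0)=T'(1)$; finally $T'(1)=-d_{\mu}$ follows from $T(1)=0$ together with the identity $HP_{\mu}(q)=T(q)/(1-q)$ of the Remark after Proposition~\ref{hpspec} and $HP_{\mu}(1)=d_{\mu}$ (equivalently, by exact dimensionality $Z_{r}=\log\mu(B(x,r))/(-\log r)\to-d_{\mu}$ in $\mu$-probability, which via Corollary~\ref{corldp} forces the infimum of $\Lambda^{*}$ over every neighbourhood of $-d_{\mu}$ to vanish, hence $\Lambda^{*}(-d_{\mu})=0$ by lower semicontinuity and nonnegativity).

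With $x_{0}=-d_{\mu}$ the conclusion is routine: on $I_{\epsilon}=(-\infty,-d_{\mu}-\epsilon)$ the function $\Lambda^{*}$ is decreasing, so its infimum is the monotone limit as $x\uparrow-d_{\mu}-\epsilon$, which equals $\Lambda^{*}(-d_{\mu}-\epsilon)$ by continuity of convex functions on the interior of their effective domain (the point $-d_{\mu}-\epsilon$ lies strictly to the left of the minimizer, hence in that interior, unless $\Lambda^{*}\equiv+\infty$ on a left neighbourhood of it, in which case the stated equality holds with both sides $+\infty$); symmetrically $\inf_{I_{-\epsilon}}\Lambda^{*}=\Lambda^{*}(-d_{\mu}+\epsilon)$. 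Positivity is then immediate from strict convexity (strict monotonicity): $\Lambda^{*}(-d_{\mu}\pm\epsilon)>\Lambda^{*}(-d_{\mu})=0$ since $-d_{\mu}\pm\epsilon\neq-d_{\mu}$. The only genuinely substantive point is the identification $\Lambda'(0)=-d_{\mu}$, that is, matching the thermodynamic quantity $T'(1)$ with the pointwise dimension; the boundary/continuity issue for $\Lambda^{*}$ at the endpoints $-d_{\mu}\pm\epsilon$ is a minor technicality settled by convexity and lower semicontinuity.
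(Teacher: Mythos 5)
Your proof is correct and follows the same route as the paper: apply Corollary~\ref{corldp} to the intervals $I_{\pm\epsilon}$ and then evaluate the infimum of $\Lambda^{*}$. The only difference is that you actually justify the final step $\inf_{I_{\pm\epsilon}}\Lambda^{*}=\Lambda^{*}(-d_{\mu}\mp\epsilon)>0$ (by locating the unique zero of $\Lambda^{*}$ at $-d_{\mu}$ via $\Lambda'(0)=T'(1)=-d_{\mu}$, or equivalently via exact dimensionality, and invoking strict convexity and lower semicontinuity), whereas the paper asserts this equality without comment.
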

\begin{proof}
Recall that the exponential rate for dimension $\underline{\psi}$ is defined by
\begin{equation} \label{fldpsimk}
\underline{\psi}(\pm\epsilon)=\underset{r \rightarrow 0}{\underline{\lim}}\frac{1}{\log r}\log \mu \left(\left\{ \frac{\log \mu(B(x,r)) }{-\log r} \in I_{\pm\epsilon} \right\}\right),
\end{equation}

where $I_{\epsilon}=(-\infty, -d_{\mu} - \epsilon)$ and $I_{-\epsilon}=(-d_{\mu}+\epsilon, +\infty).$
So, by Corollary \ref{corldp} we have that
\begin{eqnarray*}
\underline{\psi}(\pm\epsilon)&=&\underset{r \rightarrow 0}{\underline{\lim}}\frac{1}{\log r}\log \mu \left(\left\{ \frac{\log \mu(B(x,r))}{-\log r} \in I_{\pm\epsilon} \right\}\right) \\
                                 &=& \inf_{x \in I_{\pm\epsilon}} \Lambda^{*}(x)\\
                                 &=& \Lambda^{*}(-d_{\mu}\mp\epsilon)
\end{eqnarray*}
which proves the proposition.
\end{proof}

From now on, assume that $\zeta$ is an H\"older potential such that $P(\zeta)=0.$ To obtain exponential rate for the return times (Prop. \ref{ratefastconfrep}), we recall the closing lemma.

\begin{Lem}[Closing lemma]
If $(J,g)$ is a repeller then for all $r, k, x$ such that $d(g^kx,x)<r$ there exists a point $z$ with $g^k(z)=z$ and $d(x,z)<c_1r, c_1>0.$
\end{Lem}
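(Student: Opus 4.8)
The plan is to realize the periodic point $z$ as the fixed point of a contracting inverse branch of $g^k$ that follows the orbit of $x$, and then to invoke the Banach fixed point theorem. Throughout I may assume $r$ is small; the case of large $r$ is either trivial or absorbed into the constant $c_1$, and only small $r$ is used afterwards.

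\emph{Step 1 (uniformly contracting inverse branches).} Since $g$ is expanding on $J$, the differential $d_xg$ is invertible for every $x\in J$, hence for $x$ in a neighbourhood of $J$, so $g$ is a local diffeomorphism there. First I would replace the metric $d$ by an adapted (Lyapunov--Mather) metric $d_1$, bi-Lipschitz equivalent to $d$, in which $g$ expands uniformly: there is $\lambda>1$ with $\|d_xg\,v\|_1\ge\lambda\|v\|_1$ for all $x\in J$. Such a metric is produced from $\|v\|_{1,x}^2=\sum_{j=0}^{N-1}\|d_xg^jv\|^2\rho^{-2j}$ with $1<\rho<\beta$ and $N$ large enough that $c^2(\beta/\rho)^{2N}\ge1$; passing to $d_1$ changes the final constant only by the equivalence factor. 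Then, using compactness of $J$ and the open mapping theorem, I would fix $\rho_1>0$ such that the $\rho_1$-neighbourhood of $J$ lies inside $V$ and such that for every $y\in J$ there is an inverse branch $g_y^{-1}\colon B(gy,\rho_1)\to M$ with $g_y^{-1}(gy)=y$ which is $(1/\lambda)$-Lipschitz for $d_1$.

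\emph{Step 2 (the inverse branch along the orbit).} Write $x_i=g^ix$ for $0\le i\le k$, so $d_1(x_k,x_0)$ is comparable to $d(x_k,x_0)<r$. Set $H=g_{x_0}^{-1}\circ g_{x_1}^{-1}\circ\cdots\circ g_{x_{k-1}}^{-1}$. An easy induction (each $g_{x_i}^{-1}$ maps a ball centred at $x_{i+1}$ into a ball of $1/\lambda$ times the radius centred at $x_i$) shows that $H$ is well defined on $B(x_k,\rho_1)$, is a $\lambda^{-k}$-contraction there, satisfies $g^k\circ H=\mathrm{id}$ and $H(x_k)=x_0=x$. Moreover $H$ maps $J\cap B(x_k,\rho_1)$ into $J$: if $y$ lies there, then for $n\ge k$ one has $g^n(H(y))=g^{n-k}(y)\in J$, while for $0\le n<k$ the point $g^n(H(y))$ lies within $\rho_1$ of $x_n\in J$, hence in $V$; so $H(y)\in\bigcap_{n\ge0}g^{-n}V=J$.

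\emph{Step 3 (fixed point and conclusion).} Since $d_1(H(x),x)=d_1(H(x),H(x_k))\le\lambda^{-k}d_1(x,x_k)$, a short computation shows that for $R:=\dfrac{\lambda^{-k}}{1-\lambda^{-k}}\,d_1(x,x_k)\le\dfrac{\lambda^{-1}}{1-\lambda^{-1}}\,d_1(x,x_k)$ the set $J\cap\overline{B}(x,R)$ is non-empty, complete, contained in the domain $B(x_k,\rho_1)$ of $H$ (here we use $r$ small, so that $R+d_1(x,x_k)<\rho_1$), and mapped into itself by the contraction $H$. By the Banach fixed point theorem $H$ has a fixed point $z$ in this set; then $g^k(z)=g^k(H(z))=z$ and $d_1(x,z)\le R\lesssim d_1(x,x_k)\lesssim r$, which gives $d(x,z)<c_1r$ upon returning to the metric $d$.

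The main obstacle, and essentially the only point needing care, is Step 1: the definition of expanding only furnishes $\|d_xg^nv\|\ge c\beta^n\|v\|$ with a constant $c$ that may be smaller than $1$, so $g$ need not expand in a single step and the naive iterated inverse branches need not be contractions with domains of uniform size; the adapted metric is exactly what repairs this. One must also keep the balls $B(gy,\rho_1)$ inside the domain $V$ of $g$, which is why $\rho_1$ is chosen so that the $\rho_1$-neighbourhood of $J$ is contained in $V$; this is possible by compactness of $J$. The case where $r$ is not small can be treated separately (once periodic points are known to be dense in $J$ the conclusion is immediate for a larger $c_1$), or simply excluded, since only small $r$ enters the subsequent arguments.
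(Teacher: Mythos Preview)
The paper does not prove this lemma; it is quoted as a standard structural fact about expanding repellers (a version of the Anosov closing lemma) and used as a black box in the proof of Lemma~\ref{omegameasure2}. Your argument via an adapted metric, iterated local inverse branches along the orbit, and the Banach fixed point theorem is the classical proof and is correct.

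Two minor remarks. First, intersecting with $J$ in Step~3 is unnecessary and forces you to assume implicitly that $x\in J$ (so that each $x_n\in J$ and your argument that $H$ preserves $J$ goes through); it is cleaner to apply the contraction mapping theorem directly on the closed ball $\overline{B}_{d_1}(x,R)$ in $M$, which is complete, and observe afterwards that a periodic orbit lying in the $\rho_1$-neighbourhood of $J\subset V$ automatically belongs to $J=\bigcap_{n\ge0}g^{-n}V$. Second, the lemma as stated in the paper does not restrict $x$ to $J$, but in all subsequent uses $x$ lies in the support of $\mu$, hence in $J$, so your implicit assumption is harmless.
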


We will use these properties to have information on the rate function for the return times:

Given $\epsilon, \xi>0,$ define
\begin{equation}\label{Arepsilon}
A_{\epsilon}(r)=\lt\{x \in X:  \mu(B(x,r))\geq r^{d_{\mu}+\epsilon}\rt\}
\end{equation}
and
\begin{equation}\label{Armenosxi}
A_{-\xi}(r)=\lt\{x \in X:  \mu(B(x,r))\leq r^{d_{\mu}-\xi}\rt\}.
\end{equation}

\begin{Prop}\label{ratefastconfrep}
There exist constants $a_0, d_2>0$ such that for $\epsilon, \epsilon'>0,$ the exponential rate for fast return times satisfies:
  $$\underline{\varphi}(\min\{d_2, \epsilon-\epsilon'\}, \epsilon) \geq \min\{a_0, \underline{\psi}(-\epsilon')\}>0.$$
\end{Prop}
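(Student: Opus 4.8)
The plan is to bound the probability that a return to $B(x_0,2r)$ occurs quickly by splitting the phase space according to whether the orbit of $x_0$ lands near a \emph{periodic} point or not, using the closing lemma to control the first case. Fix $\epsilon,\epsilon'>0$ and recall we want a lower bound on
\[
\underline{\varphi}(a,\epsilon)=\underset{r\to 0}{\underline{\lim}}\ \frac{1}{\log r}\log \mu\lt(\lt\{x_0: \mu_{B(x_0,2r)}\lt(\tau_{B(x_0,2r)}\le r^{-d_\mu+\epsilon}\rt)\ge Cr^a\rt\}\rt)
\]
with $a=\min\{d_2,\epsilon-\epsilon'\}$. First I would write the conditional probability of a fast return as a sum over $1\le k\le r^{-d_\mu+\epsilon}$ of the $\mu$-measure of the set of $y\in B(x_0,2r)$ with $d(g^ky,y)<4r$, divided by $\mu(B(x_0,2r))$. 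Such a $y$ satisfies $d(g^kx_0,x_0)<c\,r$ for a constant $c$ depending on the local expansion, so by the closing lemma there is a periodic point $z$ of period $k$ with $d(x_0,z)<c_1 c\,r$; hence $x_0$ lies within $O(r)$ of the finite set $\mathrm{Per}_k$ of period-$k$ points. The crucial counting input is that a conformal repeller has at most exponentially many periodic points: $\#\mathrm{Per}_k\le e^{Kk}$ for some $K$ (the topological entropy, up to constants), so the union $\bigcup_{k\le r^{-d_\mu+\epsilon}}\mathrm{Per}_k$ has cardinality bounded by $r^{-d_\mu+\epsilon}e^{K r^{-d_\mu+\epsilon}}$ — which is \emph{super-polynomial} in $1/r$ and therefore too large. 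The fix is to treat returns at small $k$ (say $k\le K_0\log(1/r)$ for a suitable constant $K_0$) separately from longer returns; for $k\le K_0\log(1/r)$ the number of relevant periodic points is polynomial in $1/r$, while for $k>K_0\log(1/r)$ one uses instead the Gibbs property to bound $\mu(g^{-k}B(x_0,2r)\cap B(x_0,2r))\le \kappa\,\mu(B(x_0,2r))e^{-k(\text{gap})}$, summing a geometric series that is negligible once $k\gtrsim\log(1/r)$.

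Next, on the complement of $A_{-\epsilon'}(2r)$ (defined in \eqref{Armenosxi}), i.e.\ where $\mu(B(x_0,2r))\le (2r)^{d_\mu-\epsilon'}$, I would estimate: the contribution of short returns ($2\le k\le K_0\log(1/r)$) to $\mu_{B(x_0,2r)}(\tau_{B(x_0,2r)}\le r^{-d_\mu+\epsilon})$ is at most $(\text{number of bad }k)\times\max_k\mu(B(x_0,2r)\cap g^{-k}B(x_0,2r))/\mu(B(x_0,2r))$, and Gibbs/conformality lets one bound the numerator by $C'\mu(B(x_0,2r))^{1+\delta_0}$ for some $\delta_0>0$ coming from the fact that $g^{-k}B(x_0,2r)$ near a periodic point is a ball of radius $O(r/\beta^k)$, giving a measure gain of $\mu(B(z,cr/\beta^k))\lesssim (r/\beta^k)^{d_\mu-\epsilon'}$. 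Thus on this complement the conditional fast-return probability is at most $C'' r^{d_2}$ for some constant $d_2>0$ (depending on $\beta$, $d_\mu$, $\epsilon'$), \emph{plus} the geometric tail from long returns which is $o(r^N)$ for all $N$. Consequently the set $\{x_0:\mu_{B(x_0,2r)}(\tau_{B(x_0,2r)}\le r^{-d_\mu+\epsilon})\ge Cr^{\min\{d_2,\epsilon-\epsilon'\}}\}$ is, for small $r$, contained in $A_{-\epsilon'}(2r)$; here the $\epsilon-\epsilon'$ appears because on $A_{-\epsilon'}(2r)^c$ the bound $\mu(B(x_0,2r))\le(2r)^{d_\mu-\epsilon'}$ forces the numerator $\mu(B(x_0,2r))^{1+\delta}$ to beat $r^{\epsilon-\epsilon'}$ only up to an exponent comparison.

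The conclusion is then immediate: by the last inclusion and monotonicity of $\mu$,
\[
\mu\lt(\lt\{x_0:\mu_{B(x_0,2r)}(\tau_{B(x_0,2r)}\le r^{-d_\mu+\epsilon})\ge Cr^{\min\{d_2,\epsilon-\epsilon'\}}\rt\}\rt)\le \mu(A_{-\epsilon'}(2r)),
\]
so taking $\tfrac{1}{\log r}\log$ and $\underline{\lim}_{r\to0}$ gives $\underline{\varphi}(\min\{d_2,\epsilon-\epsilon'\},\epsilon)\ge \underline{\psi}(-\epsilon')$, using that $\underline{\psi}(-\epsilon')$ is precisely the exponential rate of $\mu(\{ \log\mu(B(x,r))/(-\log r)>-d_\mu+\epsilon'\})$ and that replacing $r$ by $2r$ changes nothing in the limit. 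Separately, the long-return geometric tail contributes a bound $\mu(\ldots)\le e^{-(\text{const})/r^{\text{something}}}$ which forces the corresponding rate to be $+\infty$, hence the $\min$ with a constant $a_0>0$: $a_0$ is whatever positive exponent is needed so that $Cr^{a_0}$ dominates the negligible long-return part uniformly. Combining, $\underline{\varphi}(\min\{d_2,\epsilon-\epsilon'\},\epsilon)\ge\min\{a_0,\underline{\psi}(-\epsilon')\}$, and positivity of the right-hand side follows from Proposition~\ref{ratedimrep}, which gives $\underline{\psi}(-\epsilon')=\Lambda^*(-d_\mu+\epsilon')>0$. The main obstacle, as indicated, is the periodic-point count: one must split the return time into a logarithmic window (handled by the closing lemma plus polynomial counting) and a long-return regime (handled by mixing/Gibbs decay), and carefully track how the expansion rate $\beta$ turns the shrinking preimages $g^{-k}B(x_0,2r)$ into a genuine gain of measure $r^{d_2}$; getting the two regimes to overlap with compatible constants is where the real work lies.
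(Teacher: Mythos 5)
Your high-level plan is right — split the returns into a short range handled via the closing lemma and a long range handled via the thermodynamic formalism, and combine with the rate of $\mu(A_{-\epsilon'}^c)$ — but several of the individual claims are wrong in ways that sink the argument, and the most important ingredient of the paper's proof is missing. First, the claim that for $k>K_0\log(1/r)$ one has $\mu\bigl(g^{-k}B(x_0,2r)\cap B(x_0,2r)\bigr)\le \kappa\,\mu(B(x_0,2r))e^{-k(\mathrm{gap})}$ is false: by mixing, $\mu(B\cap g^{-k}B)/\mu(B)\to\mu(B)$ as $k\to\infty$, it does not decay to zero. The paper instead uses the Ruelle-Perron-Frobenius operator to get the \emph{uniform-in-$k$} estimate $\mu_{B(x_0,2r)}(g^{-k}B(x_0,2r))\le k_\zeta\,\mu\bigl(B(x_0,2r+\mathrm{diam}\,\jcal_k)\bigr)$, and then bounds the sum by the number of terms (at most $r^{-d_0}$, plus a logarithmic block) times the maximum; this is where $\mu(B(x_0,3r))$ and hence the $\epsilon-\epsilon'$ exponent come from, and the logarithmic middle block (where $\mathrm{diam}\,\jcal_k>r$) yields $\mu(B(x_0,c_3r^{d_1}))\lesssim r^{d_2}$ via Lemma \ref{auxlem}. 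There is no geometric series.

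Second, and more seriously, your claimed short-return bound $\mu(B(x_0,2r)\cap g^{-k}B(x_0,2r))\le C'\mu(B(x_0,2r))^{1+\delta_0}$ simply fails when $k=O(1)$ and $x_0$ is near a low-period periodic point $z$: there the intersection is comparable to $B(z,cr\beta^{-k})$, whose measure is comparable to that of $B(x_0,2r)$ when $k$ is small, so no measure gain is available. This is precisely why the paper constructs an exceptional set $\Omega_r$ with $\mu(\Omega_r^c)\le Dr^{a_0}$ so that for $x_0\in\Omega_r$ there is \emph{no} return at all for $k\le c_0\log\frac{1}{2r}$; the closing lemma plus the polynomial count $\#\pcal_k\le m^k$ and the bound $\mu(B(x,r))\le r^{d_\mu-\xi}$ on the set $A_{-\xi}$ are used to estimate $\mu(\Omega_r^c)$, and $a_0$ is exactly the rate of that estimate — not, as you suggest, a dominating exponent for a ``negligible long-return part.'' You gesture at the right periodic-point count but never carry out the $\Omega_r$ construction, and without it there is no way to get $a_0$. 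There are also sign errors in the handling of $A_{-\epsilon'}$: the bad set is contained in $\Omega_r^c\cup A_{-\epsilon'}^c(3r)$, not in $A_{-\epsilon'}(2r)$ (whose measure tends to $1$), though your later identification of $\underline{\psi}(-\epsilon')$ with the rate of $\mu(A_{-\epsilon'}^c)$ indicates this is only a notational slip. Overall, the missing $\Omega_r$-argument (the content of Lemma \ref{omegameasure2}) and the incorrect large-$k$ decay are genuine gaps, not cosmetic ones.
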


This proposition is a consequence of the following lemma.

\begin{Lem}\label{omegameasure2}
For any $d_0 \in (0,d_\mu)$ there exist constants $a_0, c_3, c_4, c_5, d_1, D >0$  and a set $\Omega_r$ such that
$$\mu(\Omega_r^c) < Dr^{a_0}$$
and for all $x_0 \in \Omega_r,$ one has
 $$\mu_{B(x_0,2r)}\lt({\tau_{B(x_0,2r)} \leq r^{-d_0}}\rt) \leq \lt(c_4-c_5\log r\rt) \mu(B(x_0, c_3r^{d_1})) + r^{-d_0}\mu(B(x_0,3r)).$$
\end{Lem}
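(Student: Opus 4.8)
**The plan is to bound the measure of the set of quick-return starting points by splitting according to where the orbit returns, using the closing lemma to replace a short recurrence by a genuine periodic point, and then covering the periodic points by balls whose total measure is controlled.**

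The starting observation is that if $x\in B(x_0,2r)$ and $\tau_{B(x_0,2r)}(x)=k\le r^{-d_0}$, then $g^kx\in B(x_0,2r)\subset B(x,4r)$, so $d(g^kx,x)<4r$; by the closing lemma there is a periodic point $z=z(x,k)$ with $g^kz=z$ and $d(x,z)<4c_1r$. Thus $x\in B(z,4c_1r)$ and $z$ lies within $(4c_1+2)r$ of $x_0$. Writing $P_k$ for the set of points of period (dividing) $k$, and using that $g|_J$ is expanding, the number of such $z$ with $z\in B(x_0,(4c_1+2)r)$ is at most one for $k$ below some threshold $\sim -c\log r$ (two distinct period-$k$ points are $\ge$ some $c_0\beta^{-k}$ apart, so they cannot both lie in a ball of radius $(4c_1+2)r$ once $\beta^{-k}\gg r$, i.e. $k\le d_1(-\log r)$ for a suitable $d_1$); for larger $k$ one simply uses the trivial bound $\mu(B(x_0,3r))$ on the whole event. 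This is exactly the dichotomy producing the two terms in the conclusion: the first term handles $k\le d_1(-\log r)$, where

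\begin{equation*}
\mu_{B(x_0,2r)}\bigl(\tau_{B(x_0,2r)}\le r^{-d_0}\bigr)\le \frac{1}{\mu(B(x_0,2r))}\sum_{k\le d_1(-\log r)}\mu\bigl(B(z_k,4c_1r)\bigr),
\end{equation*}

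and the second handles $d_1(-\log r)<k\le r^{-d_0}$ via $\mu_{B(x_0,2r)}(\cdot)\le r^{-d_0}\mu(B(x_0,2r))/\mu(B(x_0,2r))$ — wait, more carefully, the crude bound $\mu(B(x_0,2r)\cap\{g^kx\in B(x_0,2r)\})\le \mu(B(x_0,3r))$ for each of at most $r^{-d_0}$ values of $k$, divided by $\mu(B(x_0,2r))$, giving the $r^{-d_0}\mu(B(x_0,3r))$ term after one absorbs the normalization into $\Omega_r$.

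The role of $\Omega_r$ is to make these estimates uniform: I would take $\Omega_r$ to be the set of $x_0$ for which $\mu(B(x_0,2r))\ge r^{d_\mu+\epsilon_0}$ for a small fixed $\epsilon_0$ (so that the division by $\mu(B(x_0,2r))$ costs only a polynomial factor $r^{-d_\mu-\epsilon_0}$, absorbed into $c_4-c_5\log r$ by choosing constants, and into the $r^{-d_0}$ exponent since $d_0<d_\mu$), intersected with a set on which the Gibbs/conformality estimates for comparing $\mu(B(z_k,4c_1r))$ to $\mu(B(x_0,c_3r^{d_1}))$ hold. By Proposition~\ref{ratedimrep}-type control (the rate function $\underline\psi$, or directly the large deviation estimate for $\log\mu(B(x_0,r))/(-\log r)$), the complement of $\{\mu(B(x_0,2r))\ge r^{d_\mu+\epsilon_0}\}$ has measure $\le Dr^{a_0}$ for suitable $a_0,D$. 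The key comparison then is: each ball $B(z_k,4c_1r)$ around a period-$k$ point close to $x_0$, with $k\le d_1(-\log r)$, satisfies $\mu(B(z_k,4c_1r))\le c_3'\,\mu(B(x_0,c_3r^{d_1}))$, because by bounded distortion along the finite orbit of length $k\le d_1(-\log r)$ the ball $B(x_0,c_3 r^{d_1})$ gets mapped (or its preimage structure) in a way that engulfs $B(z_k,4c_1r)$ up to the Gibbs constant $\kappa_\varphi$; summing over the $\le d_1(-\log r)$ values of $k$ produces the factor $(c_4-c_5\log r)$.

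**The main obstacle** is precisely this last geometric comparison — controlling $\mu(B(z_k,4c_1r))$ by $\mu(B(x_0,c_3r^{d_1}))$ uniformly over $k\le d_1(-\log r)$. It requires combining three ingredients: the expanding bound $\|d_xg^n v\|\ge c\beta^n\|v\|$ to say that $g^{-k}$ contracts by at least $\beta^{-k}\gtrsim r^{d_1}$ near $z_k$, the $C^{1+\alpha}$ bounded distortion estimate to transfer this to a comparison of ball sizes with a multiplicative constant independent of $k$, and the Gibbs property of $\nu=\nu_\varphi$ (with constant $\kappa_\varphi$) pushed forward under $\chi$ to turn the geometric inclusion into a measure inequality. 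The exponent $d_1$ must be chosen small enough that $r^{d_1}\gtrsim\beta^{-d_1(-\log r)}$, i.e. $d_1\le d_1\log\beta$... more precisely one needs $\beta^{k}\le r^{-d_1 \cdot(\text{something})}$ throughout the range, which fixes $d_1$ in terms of $\log\beta$ and the distortion constant; the threshold $d_1(-\log r)$ for the period and the radius $c_3 r^{d_1}$ must be chosen consistently. Once $d_1$ and $\Omega_r$ are fixed, assembling the two terms is routine bookkeeping, and setting $d_2$ to be (for instance) $d_1$ or the gap $d_\mu-d_0$ as needed makes the statement of Proposition~\ref{ratefastconfrep} fall out by optimizing over $d_0$.
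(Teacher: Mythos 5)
Your proposal identifies the right two ingredients (closing lemma for short returns, splitting the sum over $k$ at a logarithmic threshold), and your geometric observation that a short return produces a nearby periodic point is exactly what the paper uses. But the way you assemble them has a genuine gap: the normalization by $\mu(B(x_0,2r))$ is never removed.

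Concretely, for $k$ below the log-threshold your bound is
\begin{equation*}
\mu_{B(x_0,2r)}\bigl(g^{-k}B(x_0,2r)\bigr)\le \frac{\mu\bigl(B(z_k,4c_1 r)\bigr)}{\mu\bigl(B(x_0,2r)\bigr)},
\end{equation*}
but since $z_k$ lies within $(4c_1+2)r$ of $x_0$ and the ball $B(z_k,4c_1r)$ has the same scale as $B(x_0,2r)$, this ratio is of order $1$ for typical $x_0$, not of order $\mu(B(x_0,c_3r^{d_1}))\sim r^{d_1 d_\mu}$. Likewise for large $k$ the crude bound gives $r^{-d_0}\mu(B(x_0,3r))/\mu(B(x_0,2r))\lesssim r^{-d_0}$, not $r^{-d_0}\mu(B(x_0,3r))\sim r^{-d_0+d_\mu}$. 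Taking $\Omega_r=\{\mu(B(x_0,2r))\ge r^{d_\mu+\epsilon_0}\}$ only caps $1/\mu(B(x_0,2r))$ by a \emph{polynomial} divergence $r^{-d_\mu-\epsilon_0}$; that cannot be absorbed into a logarithmic factor $c_4-c_5\log r$, nor into the exponent of $r^{-d_0}$ without changing the statement. So the normalization is a real obstruction, not bookkeeping.

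The paper resolves this with two moves you did not make. First, $\Omega_r$ is not a lower-dimension set: it is the set of $x_0$ that stay at distance $\ge(4c_1+2)r$ from every periodic point of period $k\le c_0\log\frac{1}{2r}$ (intersected with an $A_{-\xi}$-type set used only to estimate $\mu(\Omega_r^c)$). For such $x_0$, the short-return event is \emph{empty} — there is nothing to bound, and no division by $\mu(B(x_0,2r))$ occurs. Second, for medium and large $k$ the paper uses the Ruelle--Perron--Frobenius operator: $\mu(B\cap g^{-k}B)=\int\lcal^k(\mathds{1}_B)\mathds{1}_B\,d\mu\le\mu(B)\,\|\lcal^k\mathds{1}_B\|_\infty$, with $\|\lcal^k\mathds{1}_B\|_\infty\le k_\zeta\,\mu(B(x_0,2r+\mathrm{diam}\,\jcal_k))$ via the Gibbs property. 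This decorrelation estimate is precisely what cancels the $1/\mu(B(x_0,2r))$ and produces the factor $\mu(B(x_0,c_3 r^{d_1}))$ (respectively $\mu(B(x_0,3r))$) on the right. Your periodic-point/bounded-distortion argument does not provide any such decoupling of the two occurrences of $B(x_0,2r)$, which is the missing key idea.
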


\begin{proof}
We first claim that there exists $\Omega_r$ with $\mu(\Omega_r^c) \leq r^{a_0}$ such that for all $x_0 \in \Omega_r$ and for all
$k\leq c_0 \log \frac{1}{2r}$ we have $B(x_0,2r) \cap g^{-k}(B(x_0,2r)) = \emptyset.$

Indeed, let $c_0=\frac{d_{\mu}}{2\log m}$, where $m$ is the degree of the map $g.$
If $x_0$ is such that \linebreak $B(x_0,2r) \cap g^{-k}(B(x_0,2r)) \neq \emptyset$, there exists $x$ such that $d(x,x_0)<2r$ and $d(g^{k}x,x_0)<2r,$ thus
$d(x,g^kx)<4r.$ By the Closing lemma, there exists a point $z$ such that $g^kz=z$ and
$d(z,x)< 4c_1r.$

Define $\pcal_k=\{z: g^{k}z=z\}.$ Thus $d(x,\pcal_k)<4c_1r.$
Observe that
\begin{equation}\label{ck}
\ccal_r(k)= \lt\{x_0: B(x_0,2r) \cap g^{-k}(B(x_0,2r)) \neq \emptyset\rt\} \subset B(\pcal_k, (4c_1+2)r):= \bigcup_{y \in \pcal_k}B(y,(4c_1+2)r).
\end{equation}
Moreover, let $\xi=\frac{d_{\mu}}{4\log m}.$ Using \eqref{Armenosxi}, we have the inequality
\begin{eqnarray*}
\mu\lt(A_{(-\xi)}((8c_1+4)r) \cap B(\pcal_k, (4c_1+2)r)\rt) &\leq& \# \pcal_k \sup_{x \in A_{(-\xi)}((8c_1+4)r)} \mu (B(x,(8c_1+4)r)) \\
&\leq& m^{k} ((8c_1+4)r)^{d_{\mu}-\xi}.
\end{eqnarray*}
Take $K= c_0 \log \frac{1}{2r}$ and define
$$\Omega_r= A_{(-\xi)}((8c_1+4)r) \cap \displaystyle\bigcap_{k \leq K} B(\pcal_k, (4c_1+2)r)^c.$$
It follows from the previous inequality and Corollary \ref{corldp} that
\begin{eqnarray*}
 \mu(\Omega_r^c) &\leq& \sum_{k=1}^{K} \mu\lt(A_{(-\xi)}((8c_1+4)r) \cap B(\pcal_k,(4c_1+2)r)\rt) + \mu\lt(A_{(-\xi)}^c((8c_1+4)r)\rt) \\
  &\leq& ((8c_1+4)r)^{d_{\mu}-\xi}\sum_{k=1}^{K} m^{k}+((8c_1+4)r)^{\underline{\psi}(-\xi)-\delta}, \\
  &\leq& ((8c_1+4)r)^{d_{\mu}-\xi}m^{K+1} + ((8c_1+4)r)^{\underline{\psi}(-\xi)-\delta}  \\
  &\leq& D r^{a_0},
\end{eqnarray*}
for $\delta >0$ sufficiently small and $a_0=\min\{d_{\mu}-\xi - c_0\log m, \underline{\psi}(-\xi)-\delta\}.$
We observe that $x_0 \in \Omega_r$ implies that $x_0 \notin B(\pcal_k, (4c_1+2)r)$ for all $k \leq c_0\log\frac{1}{2r}$, therefore from \eqref{ck},
$B(x_0,2r) \cap g^{-k}(B(x_0,2r)) = \emptyset$ which proves our initial claim.

We will now estimate the quantity $\mu_{B(x_0,2r)}\lt(g^{-k}B(x_0,2r)\rt)$ for large values of $k$.

Recall that $\zeta$ is a H\"older potential such that $P(\zeta)=0$ and that the Ruelle-Perron-Frobenius operator \linebreak $\lcal_\zeta: C(M) \to C(M)$ is defined
by $$\lcal_\zeta(f)(x)=\sum_{y \in g^{-1}(x)}e^{\zeta(y)}f(y),$$ for $f \in C(M)$ and $x \in M.$
By induction, for every $n \geq 1$,
$$\lcal^n_\zeta(f)(x)=\sum_{y \in g^{-n}(x)}e^{S_n\zeta(y)}f(y),$$
where $S_n\zeta= \displaystyle\sum_{k=0}^{n-1} \zeta\circ g^k.$
Now we have that
\begin{eqnarray*}
   \mu\lt(B(x_0,2r) \cap g^{-k}B(x_0,2r)\rt) &=& \int \mathds{1}_{B(x_0,2r)}\mathds{1}_{B(x_0,2r)}\circ g^k d\mu \\
   &=& \int \lcal^k(\mathds{1}_{B(x_0,2r)})\mathds{1}_{B(x_0,2r)} d\mu \\
   &\leq& \mu({B(x_0,2r)}) \left\|\lcal^k(\mathds{1}_{B(x_0,2r)})\right\|_{\infty}.
\end{eqnarray*}
Hence,
\begin{equation}\label{mblk}
  \mu_{{B(x_0,2r)}}\lt(g^{-k}{B(x_0,2r)}\rt) \leq \left\|\lcal^k(\mathds{1}_{B(x_0,2r)})\right\|_{\infty}.
\end{equation}

It follows that for all $R \in \jcal_k,$
\begin{eqnarray*}
\lcal^k(\mathds{1}_R)(x)  &=& \sum_{y \in g^{-k}x}e^{S_k \zeta(y)}\mathds{1}_R(y) \\
                          &\leq & \sum_{y \in g^{-k}x, \ y \in R}k_{\zeta} \mu(R),
\end{eqnarray*}
where the last inequality follows from the Gibbs property since $P(\zeta)=0.$ In addition, the preimage of
$x$ under $g^k$ has just one element in $R$. Then,
\begin{eqnarray}\label{lk1j}
\lcal^k(\mathds{1}_R)(x) \leq k_{\zeta} \mu(R).
\end{eqnarray}
We have
\begin{eqnarray*}
  \lcal^k(\mathds{1}_{B(x_0,2r)}) &=& \sum_{R \in \jcal_k, R\cap B(x_0,2r) \neq \emptyset} \lcal^k(\mathds{1}_R)\\
  &\leq& \sum_{R \in \jcal_k, R \subset B(x_0,2r+ \text{diam} (\jcal_k))}k_{\zeta} \mu(R) \\
  &\leq& k_{\zeta} \mu(B(x_0,2r+ \text{diam} (\jcal_k))).
\end{eqnarray*}
This implies by \eqref{mblk}
$$\mu_{B(x_0,2r)}\lt(g^{-k}{B(x_0,2r)}\rt) \leq  k_{\zeta} \mu(B(x_0,2r+ \text{diam} (\jcal_k))).$$
Let $k>c_0 \log \frac{1}{2r}$. We have $\text{diam}(\jcal_k)<c_2\beta^{-k}.$ For $k$ such that $c_2\beta^{-k} >r$, we have
$$\beta^{-k}< \beta^{-c_0 \log \frac{1}{2r}}=(2r)^{c_0 \log \beta},$$ which implies
\begin{eqnarray*}
\mu(B(x_0,2r+ \text{diam} (\jcal_k))) & \leq & \mu(B(x_0,3c_2\beta^{-k})) \\
                               & \leq & \mu(B(x_0,c_3r^{c_0 \log \beta})).
\end{eqnarray*}
When $k$ satisfies $c_2\beta^{-k} \leq r$, we obtain
\begin{eqnarray*}
\mu(B(x_0,2r+ \text{diam} (\jcal_k))) & \leq & \mu(B(x_0,3c_2\beta^{-k})) \\
                               & \leq & \mu(B(x_0,3r)).
\end{eqnarray*}

Recall that for all $x_0 \in \Omega_r$ and for all $k\leq c_0 \log \frac{1}{2r}, \ B(x_0,2r) \cap g^{-k}(B(x_0,2r)) = \emptyset.$ We get
\begin{eqnarray*}
 & & \mu_{B(x_0,2r)}\lt({\tau_{B(x_0,2r)} \leq r^{-d_0}}\rt)\\&\leq& \sum_{k=1}^{r^{-d_0}} \mu_{B(x_0,2r)}\lt(g^{-k}B(x_0,2r)\rt) \\
    &=& \sum_{k=c_0 \log \frac{1}{2r}}^{\lfloor \frac{\log c_2 - \log r}{\log \beta}\rfloor} \mu_{B(x_0,2r)}\lt(g^{-k}B(x_0,2r)\rt)+ \sum_{k= \lfloor \frac{\log c_2 - \log r}{\log \beta}\rfloor +1}^{r^{-d_0}} \mu_{B(x_0,2r)}\lt(g^{-k}B(x_0,2r)\rt) \\
    & \leq & \sum_{k=c_0 \log \frac{1}{2r}}^{\lfloor \frac{\log c_2 - \log r}{\log \beta}\rfloor} \mu(B(x_0, c_3r^{d_1}))+ \sum_{k= \lfloor \frac{\log c_2 - \log r}{\log \beta}\rfloor +1}^{r^{-d_0}} \mu(B(x_0,3r))\\
    &\leq& \lt(c_4-c_5\log r\rt) \mu(B(x_0, c_3r^{d_1})) + r^{-d_0}\mu(B(x_0,3r))
\end{eqnarray*}
with $d_1=c_0\log\beta$, which ends the proof.
\end{proof}

From the theory of conformal repellers we obtain the following lemma:

\begin{Lem}\label{auxlem}
There exists $d_3>0$ such that for all $r,$ $\mu(B(x,r)) \leq r^{d_3}.$
\end{Lem}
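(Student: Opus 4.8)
The plan is to bound $\mu(B(x,r))$ by the total $\mu$-mass of a \emph{bounded} number of cylinders of length comparable to $\log\frac1r$, once it is known that a cylinder of length $n$ has $\mu$-mass at most $C_0\rho^n$ for some $\rho\in(0,1)$ that does not depend on the cylinder. Two inputs enter: a thermodynamic one (uniform exponential decay of cylinder masses) and a geometric one (a Moran-type cover of balls by cylinders whose multiplicity is bounded independently of $x$ and $r$).

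\emph{Exponential decay of cylinder masses.} The first step is to show there are $C_0>0$ and $\rho\in(0,1)$ with $\nu(\jcal_n(w))\le C_0\rho^n$ for every $w\in\Sigma_A^+$ and $n\ge1$ (hence the same bound, up to the usual boundary ambiguity, for $\mu$ of the corresponding cylinder of $J$). Since $\nu$ is a non-atomic Gibbs measure of a H\"older potential on a subshift of finite type, $\theta_n:=\sup_{w}\nu(\jcal_n(w))\to0$: otherwise there are $n_j\to\infty$ and $w_j$ with $\nu(\jcal_{n_j}(w_j))\ge c>0$, a subsequence of $(w_j)$ converges to some $w^\ast$, and since $w_j$ eventually agrees with $w^\ast$ on arbitrarily many coordinates we would get $\nu(\jcal_M(w^\ast))\ge c$ for every $M$, i.e.\ an atom at $w^\ast$. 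Fix $N$ with $\kappa_\varphi\theta_N<1$. Using $P(\varphi)=P(\zeta)=0$, the Gibbs inequalities and $S_{kN}\varphi(w)=\sum_{j=0}^{k-1}S_N\varphi(\sigma^{jN}w)$ give
$$\nu(\jcal_{kN}(w))\le\kappa_\varphi\,e^{S_{kN}\varphi(w)}=\kappa_\varphi\prod_{j=0}^{k-1}e^{S_N\varphi(\sigma^{jN}w)}\le\kappa_\varphi\prod_{j=0}^{k-1}\bigl(\kappa_\varphi\,\nu(\jcal_N(\sigma^{jN}w))\bigr)\le\kappa_\varphi(\kappa_\varphi\theta_N)^k,$$
and $\jcal_n(w)\subset\jcal_{kN}(w)$ for $kN\le n$ handles arbitrary $n$; this gives the claim with $\rho=(\kappa_\varphi\theta_N)^{1/N}$.

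\emph{Moran cover and conclusion.} For the geometric step, put $\Gamma=\sup_{y\in J}\|d_yg\|<\infty$, so $1<\beta\le\Gamma$. For $w\in\Sigma_A^+$ and small $r>0$ let $n(w,r)$ be the least $n$ with $\text{diam}\,\jcal_n(w)\le r$. From $\text{diam}\,\jcal_n(w)<c_2\beta^{-n}$, the lower bound $\text{diam}\,\jcal_n(w)\ge\Gamma^{-n}\,\text{diam}\,\jcal_0(\sigma^n w)$ (coming from $\|d_yg^n\|\le\Gamma^n$ and the fact that $g^n$ maps $\jcal_n(w)$ onto one of the finitely many $0$-cylinders), and the bounded distortion of $g^n$ on cylinders, one obtains constants with $c'r\le\text{diam}\,\jcal_{n(w,r)}(w)\le r$, with each $\jcal_{n(w,r)}(w)$ containing a ball of $J$ of radius $\ge c_\bullet r$, and with $n(w,r)\ge\frac{\log(1/r)}{\log\Gamma}-C_1$. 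As in the classical Moran construction, the cylinders $\{\jcal_{n(w,r)}(w):w\in\Sigma_A^+\}$ form a finite partition of $J$, say $Q_1,Q_2,\dots$, into sets of diameter at most $r$. Since $\mu=\chi_*\nu$, any $Q_i$ meeting $B(x,r)$ lies in $B(x,3r)$; as the $Q_i$ are pairwise disjoint, each contains a ball of $J$ of radius $\ge c_\bullet r$, and $J$ is a doubling metric space (being a subset of the compact manifold $M$), the number of such $Q_i$ is at most a constant $N_0$ independent of $x$ and $r$. Therefore
$$\mu(B(x,r))\le\sum_{Q_i\cap B(x,r)\ne\emptyset}\mu(Q_i)\le N_0\max_i\nu(\jcal_{n(w_i,r)}(w_i))\le N_0C_0\,\rho^{\frac{\log(1/r)}{\log\Gamma}-C_1}=N_0C_0\rho^{-C_1}\,r^{\log(1/\rho)/\log\Gamma},$$
and for $r$ small the prefactor is absorbed into a slightly smaller power of $r$, yielding the lemma with, e.g., $d_3=\frac{\log(1/\rho)}{2\log\Gamma}$.

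The hard part is the thermodynamic step: the Gibbs property only gives $\nu(\jcal_n(w))\asymp e^{S_n\varphi(w)}$, and $\varphi$ need not be negative, so there is no decay at a single step and one must pass to blocks of length $N$ via the compactness/non-atomicity argument to extract a uniform rate. The other delicate point is keeping the multiplicity $N_0$ of the Moran cover independent of $x$ and $r$ (through the two-sided control of cylinder diameters and the doubling property of $J$); it is this uniformity that distinguishes the statement from the weaker ``for $\mu$-almost every $x$'' bound, which would follow at once from Proposition~\ref{hpspec} by Chebyshev's inequality applied to $\int_J\mu(B(x,r))^{q-1}\,d\mu(x)$ for large $q$.
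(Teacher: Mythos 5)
The paper's ``proof'' is a one-line citation to formula (6.17) in Barreira's book \cite{Bar}, whereas you supply a self-contained argument. Your route --- (i) uniform exponential decay $\nu(\jcal_n(w))\le C_0\rho^n$ of cylinder masses, combined with (ii) a Moran-cover estimate bounding the number of cylinders of diameter $\asymp r$ that meet a ball $B(x,r)$ --- is exactly the standard proof of such a uniform upper bound for equilibrium states on conformal repellers, and is presumably the content of what the authors cite. The thermodynamic step is done carefully and correctly: the compactness/non-atomicity argument to extract $N$ with $\kappa_\varphi\theta_N<1$, and then telescoping the Gibbs bound across $N$-blocks, is a clean way to obtain uniform decay without invoking the variational principle (an alternative would be to note that $\sup_w\frac1n S_n\varphi(w)\to\max_{\nu'}\int\varphi\,d\nu'<0$, by uniqueness of the equilibrium state). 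You are also right that this is the genuinely delicate point, since the Gibbs inequality alone gives no one-step decay. The geometric step, however, leans on several standard-but-nontrivial facts about conformal repellers that you assert rather than prove: bounded distortion giving two-sided control of cylinder diameters, the fact that each Moran element $Q_i$ contains a ball of comparable radius about a uniformly interior point of the Markov partition, and the resulting bounded multiplicity of the Moran cover. These are precisely the lemmas established in \cite{Pesin} and \cite{Bar}, so at this point you are implicitly relying on the same reference the paper cites explicitly; for a fully self-contained proof those facts would need to be spelled out. One small point worth flagging: as stated the lemma reads ``for all $r$,'' but the conclusion can only hold for $r$ sufficiently small (for $r$ near $\operatorname{diam}J$ one has $\mu(B(x,r))$ close to $1$ while $r^{d_3}<1$); your proof, like the paper's intended use of the lemma, yields the bound for small $r$, which is all that is needed in the proof of Proposition~\ref{ratefastconfrep}.
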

\begin{proof}
  One can use (6.17) in \cite{Bar}.
\end{proof}

\begin{proof}[Proof of Proposition \ref{ratefastconfrep}]
Using the above lemma we have that there exist constants \linebreak $c_6, d_2>0$ such that $\lt(c_4-c_5\log r\rt) \mu(B(x_0, c_3r^{d_1})) \leq c_6r^{d_2},$ for all $x_0.$ Let $0<\epsilon'<\epsilon,$
for $x \in \Omega_r \cap A_{(- \epsilon')}(3r),$ using Lemma \ref{omegameasure2} we obtain
\begin{eqnarray*}
\mu_{B(x_0,2r)}\lt({\tau_{B(x_0,2r)} \leq r^{-d_0}}\rt) & \leq & c_6r^{d_2}+r^{-d_0}(3r)^{d\mu-\epsilon'} \\
     &\leq & c_7r^{\min\{d_2,-d_0+d_{\mu}-\epsilon'\}}.
\end{eqnarray*}
Hence
\begin{eqnarray*}
\mu\lt(\lt\{x: \mu_{B(x_0,2r)}\lt({\tau_{B(x_0,2r)} \leq r^{-d_0}}\rt) > c_7r^{\min\{d_2,-d_0+d_{\mu}-\epsilon'\}}\rt\}\rt)  & \leq & \mu\lt(\lt(\Omega_r \cap A_{(- \epsilon')}(3r)\rt)^c\rt)\\
  & \leq & \mu(\Omega_r^c)+ \mu\lt(A_{(- \epsilon')}^c(3r)\rt).
\end{eqnarray*}
Take $d_0=d_{\mu}-\epsilon.$
Finally, using Lemma \ref{esslem}, we get
$$\underline{\varphi}(\min\{d_2, \epsilon- \epsilon'\}, \epsilon) \geq \min\{a_0, \underline{\psi}(-\epsilon')\}$$
and the proposition is proved.

\end{proof}

We are now able to prove Theorem \ref{versionconfrep}
\begin{proof}[Proof of the Theorem \ref{versionconfrep}] For $\gamma \in (0,1),$ we get by Proposition \ref{ratedimrep} that

$$\underline{\psi}(\gamma\epsilon)= \Lambda^{*}(-d_{\mu}-\gamma\epsilon)>0$$ and $$\underline{\psi}(-\epsilon'')= \Lambda^{*}(-d_{\mu}+\epsilon'')>0.$$
Moreover, by Proposition \ref{ratefastconfrep}, $$\underline{\varphi}(\min\{d_2, \epsilon- \epsilon'\}, \epsilon) \geq \min\{a_0, \underline{\psi}(-\epsilon')\}=\min\{a_0, \Lambda^{*}(-d_{\mu}+\epsilon')\}>0.$$
Thus, it follows from Theorem \ref{principalth}, that
\begin{eqnarray*}
\underset{r \rightarrow 0}{\underline{\lim}} \frac{1}{\log r} \log \mu\lt(\tau_r \geq r^{-d_{\mu}- \epsilon}\rt) &\geq&  \max_{\gamma \in (0,1)}\min\{(1-\gamma)\epsilon, \Lambda^{*}(-d_{\mu}-\gamma \epsilon)\} >0
\end{eqnarray*}
and
\begin{eqnarray*}
&& \underset{r \rightarrow 0}{\underline{\lim}} \frac{1}{\log r} \log \mu\lt(\tau_r \leq r^{-d_{\mu}+ \epsilon}\rt) \\
&\geq& \max_{\substack{\gamma \in (0,1) \\ \epsilon'>0 \\ \epsilon''>0}}\min \lt\{-\gamma\epsilon-\epsilon''+\min\{d_2, \epsilon-\epsilon'\}, \Lambda^{*}(-d_{\mu}-\gamma \epsilon),\min\{a_0, \Lambda^{*}(d_{\mu}+\epsilon')\},\Lambda^{*}(-d_{\mu}+\epsilon'')\rt\}>0,
\end{eqnarray*}
thus the theorem is proved.
\end{proof}

\begin{proof}[Proof of the Proposition \ref{casenotmaxdim}]
By definition $T^{*}(x)=\displaystyle\sup_{q \in \rds}\{qx- T(q)\}.$ The supremum is achieved for $q$ such that $\dfrac{d}{dq}(qx-T(q))=x- T'(q)=0,$ that is, $T'(q)=x.$
Thus, for any $q \in \rds$
$$T^{*}(T'(q))=qT'(q)- T(q).$$
So, it follows that
$$(T^{*})'(T'(q))T''(q)=qT''(q),$$
and hence, $(T^{*})'(T'(q))=q$ and, differentiating, we obtain
$$(T^{*})''(T'(q))T''(q)=1,$$ that is
$$(T^{*})''(T'(q))=\frac{1}{T''(q)} \ \mbox{for every} \ q \in \rds \ \mbox{such that} \ T'(q)=x.$$

Since $\Lambda(\lambda)=T(\lambda+1)$ and $\Lambda^{*}(x)=-x+T^{*}(x)$ we conclude that
$$(\Lambda^{*})''(x)= \frac{1}{T''(q)}, \ \mbox{where} \ T'(\lambda+1)=x.$$
Moreover, by Proposition \ref{hpspec} this is non negative.

For $x=-d_{\mu}$ we have $\lambda=0.$
Then, by Lemma $5$ in \cite{Pesin}
$$(\Lambda^{*})''(-d_{\mu})= \frac{1}{T''(1)}=\frac{\lambda_{\mu}}{\sigma^2_{\mu}}.$$
Finally, for $\epsilon$ sufficiently small, $g_1 \geq \kappa c\epsilon^2$
and taking $\epsilon'=\epsilon''=\gamma\epsilon=\dfrac{\epsilon}{\eta},$ with $\eta>3$, we have
$g_2 \geq  \kappa c \lt(\dfrac{\epsilon}{3}\rt)^2.$

\end{proof}


\section{Proof of the main result}\label{proofmainresult}
In this section we prove the Theorem \ref{principalth} using the method developed in \cite{Saussol}.
We begin by the following lemma which will be needed in the proof of our main theorem.
\begin{Lem}\label{esslem}
Let $(a_i(r))_{i=1,...,p}, a_i(r)>0.$ If $\gamma_i= \underset{r \rightarrow 0}{\underline{\lim}} \frac{1}{\log r}\log a_i(r)>0$. Then
$$\underset{r \rightarrow 0}{\underline{\lim}} \frac{1}{\log r}\log \left(\sum_{i=1}^{p}a_i(r)\right) \geq \min_{i=1,...,p}\gamma_i.$$
\end{Lem}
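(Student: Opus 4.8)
The statement is elementary; the only thing to watch is that $\log r<0$ for $r<1$, so dividing an inequality by $\log r$ reverses it, and the quantities $\gamma_i = \underline{\lim}_{r\to0}\frac{\log a_i(r)}{\log r}$ are the relevant reformulations of the hypotheses. I would argue directly with an $\varepsilon$-of-liminf argument. Set $\gamma_* := \min_{1\le i\le p}\gamma_i$, and fix an arbitrary $\delta>0$.

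First, for each $i\in\{1,\dots,p\}$, the definition of $\gamma_i$ as a lower limit gives some $\rho_i>0$ such that $\frac{\log a_i(r)}{\log r} > \gamma_i-\delta$ for all $0<r<\rho_i$. Put $\rho := \min\{\rho_1,\dots,\rho_p,1\}>0$. For $0<r<\rho$ we have $\log r<0$, so multiplying through by $\log r$ reverses the inequality and yields $\log a_i(r) < (\gamma_i-\delta)\log r$, i.e. $a_i(r) < r^{\gamma_i-\delta}$. Since $r<1$ and $\gamma_i\ge\gamma_*$, we get $r^{\gamma_i-\delta}\le r^{\gamma_*-\delta}$, hence $a_i(r)<r^{\gamma_*-\delta}$ for every $i$ simultaneously.

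Summing over the (finitely many) indices, $\sum_{i=1}^p a_i(r) < p\,r^{\gamma_*-\delta}$ for $0<r<\rho$, so $\log\big(\sum_{i=1}^p a_i(r)\big) < \log p + (\gamma_*-\delta)\log r$. Dividing once more by $\log r<0$ (reversing the inequality),
\[
\frac{1}{\log r}\log\Big(\sum_{i=1}^p a_i(r)\Big) > \frac{\log p}{\log r} + \gamma_*-\delta \qquad (0<r<\rho).
\]
Taking $\underline{\lim}_{r\to0}$ and using $\frac{\log p}{\log r}\to 0$, we obtain $\underline{\lim}_{r\to0}\frac{1}{\log r}\log\big(\sum_{i=1}^p a_i(r)\big) \ge \gamma_*-\delta$. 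Since $\delta>0$ was arbitrary, letting $\delta\downarrow 0$ gives the claimed bound $\underline{\lim}_{r\to0}\frac{1}{\log r}\log\big(\sum_{i=1}^p a_i(r)\big)\ge \gamma_* = \min_{1\le i\le p}\gamma_i$.

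\textbf{Main point of care.} There is no real obstacle; the one place to be careful is the sign of $\log r$, which forces every division by $\log r$ to flip the inequality. The finiteness of the index set is used crucially to take a common threshold $\rho$ and to bound the sum by $p$ times the largest term (for an infinite family the factor $p$ would be lost). An equivalent phrasing replaces the per-term bound by $\sum_i a_i(r)\le p\max_i a_i(r)$ together with $\frac{1}{\log r}\max_i\log a_i(r)=\min_i\frac{\log a_i(r)}{\log r}$, but the $\varepsilon$-threshold version above seems the most transparent.
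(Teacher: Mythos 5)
Your proof is correct and follows essentially the same route as the paper's: use the definition of the liminf to bound each $a_i(r)$ by $r^{\gamma_i-\delta}$ for $r$ small, sum to get $p\,r^{\gamma_*-\delta}$, take logarithms, divide by $\log r$, pass to the liminf, and finally let $\delta\downarrow 0$. The only difference is cosmetic: you spell out the sign of $\log r$ and the resulting inequality reversals, which the paper leaves implicit.
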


\begin{proof}
For all $\epsilon>0$ there exists $r_i>0$ such that $r<r_i$ implies $a_i \leq r^{\gamma_i-\epsilon}.$
Let $\epsilon>0$ sufficiently small such that $\gamma_i-\epsilon>0.$ We have,
$$\sum_{i=1}^{p}a_i(r) \leq \sum_{i=1}^{p}r^{\gamma_i-\epsilon}\leq pr^{\min\{\gamma_i\}-\epsilon}$$
and this implies
$$\frac{1}{\log r}\left(\sum_{i=1}^{p}a_i(r)\right)\geq \min_{i=1,...,p}\{\gamma_i\}-\epsilon + \frac{\log p}{\log r}.$$
Finally,
$$\underset{r \rightarrow 0}{\underline{\lim}} \frac{1}{\log r}\log \left(\sum_{i=1}^{p}a_i(r)\right) \geq \min_{i=1,...,p}\{\gamma_i\} - \epsilon.$$
The result is proved since $\epsilon$ can be chosen arbitrarily small.
\end{proof}

Denote
$$\underline{f}(\epsilon)= \underset{r \rightarrow 0}{\underline{\lim}} \frac{1}{\log r} \log \mu\lt(\tau_r \geq r^{-d_{\mu}- \epsilon}\rt)$$
and
$$\underline{f}(-\epsilon)= \underset{r \rightarrow 0}{\underline{\lim}} \frac{1}{\log r} \log \mu\lt(\tau_r \leq r^{-d_{\mu}+ \epsilon}\rt).$$

\begin{proof}[Proof of the Theorem \ref{principalth}]
Let $\gamma \in (0,1).$ We have

\begin{eqnarray*}
  \mu(\{x: \tau_r(x) \geq r^{-d_{\mu}-\epsilon}\}) &\leq& \mu\lt(\lt\{x \in A_{\gamma\epsilon}\lt(\frac{r}{4}\rt): \tau_r(x) \geq r^{-d_{\mu}-\epsilon}\rt\}\rt)  \\
   &+& \mu\lt(\lt\{x \in A_{\gamma\epsilon}^c\lt(\frac{r}{4}\rt): \tau_r(x) \geq r^{-d_{\mu}-\epsilon}\rt\}\rt).
\end{eqnarray*}

Let us define the set $$M_r=\lt\{x \in A_{\gamma\epsilon}\lt(\frac{r}{4}\rt): \tau_r(x) \geq r^{-d_{\mu}-\epsilon}\rt\}.$$
Let $\lt(B\left(x_i, \frac{r}{2}\right)\rt)_i$ be a family of balls of radius $r/2$ centered at points of $A_{\gamma\epsilon}(\frac{r}{4})$ that covers $M_r$ and such that
$B\left(x_i, \frac{r}{4}\right)\cap B\left(x_j,\frac{r}{4}\right)=\emptyset \ if \ x_i \neq x_j.$ We have
\begin{eqnarray*}
  \mu\lt(\lt\{x: \tau_r(x) \geq r^{-d_{\mu}-\epsilon}\rt\}\rt) &\leq& \mu(\cup_i B_i \cap M_r)+ \mu\lt(\lt\{x \in A_{\gamma\epsilon}^c\lt(\frac{r}{4}\rt): \tau_r(x) \geq r^{-d_{\mu}-\epsilon}\rt\}\rt) \\
  &\leq& \sum_i \mu(B_i \cap M_r)+ \mu\lt(A_{\gamma\epsilon}^c\lt(\frac{r}{4}\rt)\rt).
\end{eqnarray*}
Using first the triangle inequality and then Kac's lemma and Markov inequality, we obtain
$$\mu(B_i \cap M_r) \leq \mu\lt(B_i \cap \lt\{\tau_{B_i} \geq r^{-d_{\mu}-\epsilon}\rt\}\rt)\leq r^{d_{\mu}+ \epsilon} \int_{B_i} \tau_{B_i} d\mu = r^{d_{\mu}+ \epsilon}.$$
Observe that $\displaystyle\sum_i \left(\frac{r}{4}\right)^{d_{\mu}+\gamma\epsilon} \leq \sum_i \mu\lt(B\left(x_i, \frac{r}{4}\rt)\right) \leq 1.$ Thus, since the
balls are disjoint it follows that the number of balls is bounded by $\left(\frac{1}{4}r\right)^{-d_{\mu}-\gamma\epsilon}$.
Therefore,
\begin{eqnarray*}
\mu\lt(\lt\{x: \tau_r(x) \geq r^{-d_{\mu}-\epsilon}\rt\}\rt) &\leq& \sum_i r^{d_{\mu} +\epsilon} + \mu\left(A_{\gamma\epsilon}^c\left(\frac{r}{4}\right)\right)\\
 &\leq& \left(\frac{1}{4}r\right)^{-d_{\mu}-\gamma\epsilon}r^{d_{\mu}+\epsilon} +  \mu\left(A_{\gamma\epsilon}^c\left(\frac{r}{4}\right)\right)\\
 &\leq& 4^{d_{\mu}+\gamma\epsilon} r^{(1-\gamma)\epsilon} +  \mu\left(A_{\gamma\epsilon}^c\left(\frac{r}{4}\right)\right).
\end{eqnarray*}
Thus,
$$\frac{1}{\log r} \log \mu\lt(\lt\{x: \tau_r(x) \geq r^{-d\mu-\epsilon}\rt\}\rt) \geq \frac{1}{\log r} \log \lt(4^{d_{\mu}+\gamma\epsilon} r^{(1-\gamma)\epsilon} +  \mu\left(A_{\gamma\epsilon}^c\left(\frac{r}{4}\right)\right)\rt).$$
Hence, by Lemma \ref{esslem}, we get
\begin{eqnarray*}
\underline{f}(\epsilon) &\geq& \underset{r \rightarrow 0}{\underline{\lim}} \frac{1}{\log r} \log \left(4^{d_{\mu}+\gamma\epsilon} r^{(1-\gamma)\epsilon} +  \mu\lt(A_{\gamma\epsilon}^c\lt(\frac{r}{4}\rt)\rt)\right) \\
   &\geq& \min \lt\{\underset{r \rightarrow 0}{\underline{\lim}} \frac{1}{\log r} \log \left(4^{d_{\mu}+\gamma\epsilon} r^{(1-\gamma)\epsilon}\right), \underset{r \rightarrow 0}{\underline{\lim}} \frac{1}{\log r} \log \mu\left(A_{\gamma\epsilon}^c\left(\frac{r}{4}\right)\right)\rt\} \\
&=& \min \left\{(1-\gamma)\epsilon, \underline{\psi}(\gamma\epsilon) \right\}.
\end{eqnarray*}
This proves the first statement.

Now, let $\epsilon''>0$. We define
$$\Gamma_{r}=\lt\{x \in A_{\gamma\epsilon}(2r) \cap  A_{(-\epsilon'')}(2r): \tau_r(x) \leq r^{-d_{\mu}+\epsilon}\rt\}$$ and
$$D_r=\lt\{x_0: \mu_{B(x_0,2r)}(\tau_{B(x_0,2r)} \leq r^{-d_{\mu}+\epsilon}) \leq Cr^a\rt\}, \ C>0.$$

Let $(B(x_i, 2r))_i$ be a family of balls of radius $2r$ centered at points of \linebreak
$A_{\gamma\epsilon}(2r)\cap D_r \cap A_{(-\epsilon'')}(2r)$ that covers $\Gamma_{r}\cap D_r$ and such that
$B\left(x_i, r\right)\cap B\left(x_j,r\right)=\emptyset \ if \ x_i \neq x_j.$
We have
\begin{eqnarray*}
\mu\lt(\lt\{x: \tau_r(x) \leq r^{-d_{\mu}+\epsilon}\rt\}\rt) &\leq& \mu\lt(\lt\{x \in A_{\gamma\epsilon}(2r) \cap D_r \cap  A_{(-\epsilon'')}(2r): \tau_r(x) \leq r^{-d_{\mu}+\epsilon}\rt\}\rt) \\
  & &+  \mu\lt(\lt\{x \in (A_{\gamma\epsilon}(2r) \cap D_r\cap  A_{(-\epsilon'')}(2r))^c: \tau_r(x) \leq r^{-d_{\mu}+\epsilon}\rt\}\rt) \\
  &\leq& \mu\lt(\cup_iB(x_i, 2r) \cap \Gamma_{r}\cap D_r\rt)+ \mu\lt(A_{\gamma\epsilon}^c(2r)\rt) + \mu\lt(D_r^c\rt) + \mu\lt( A_{(-\epsilon'')}^c(2r)\rt).
\end{eqnarray*}
We remark that
\begin{eqnarray*}
\mu\lt(\cup_iB(x_i, 2r)\cap \Gamma_r \cap D_r\rt) &\leq&  \sum_{i}\mu(B(x_i, 2r) \cap \Gamma_r \cap D_r) \\
&\leq& \sum_{i}\mu(B(x_i, 2r)) \frac{1}{\mu(B(x_i, 2r))}\mu\lt(B(x_i, 2r) \cap \lt\{\tau_{B(x_i,2r)}\leq r^{-d_{\mu}+\epsilon}\rt\}\rt)\
\end{eqnarray*}
where the last inequality follows from $\lt\{\tau_{B(x_i,r)} \leq r^{-d_{\mu}+\epsilon}\rt\} \subset \lt\{\tau_{B(x_i,2r)} \leq r^{-d_{\mu}+\epsilon}\rt\}.$
Therefore, by definition of $D_r$,
\begin{eqnarray*} \label{eq*}
& &\mu\lt(\lt\{x: \tau_r(x) \leq r^{-d_{\mu}+\epsilon}\rt\}\rt) \\&\leq& \sum_{i}\mu(B(x_i, 2r)) \mu_{(B(x_i, 2r))}\lt(\tau_{B(x_i,2r)} \leq r^{-d_{\mu}+\epsilon}\rt) + \mu\lt(A_{\gamma\epsilon}^c(2r)\rt) + \mu\lt(D_r^c\rt)
   + \mu\lt( A_{(-\epsilon'')}^c(2r)\rt) \\
 &\leq& \sum_{i}\mu(B(x_i, 2r))Cr^a + \mu\lt(A_{\gamma\epsilon}^c(2r)\rt) + \mu\lt(D_r^c\rt) + \mu\lt( A_{(-\epsilon'')}^c(2r)\rt).
\end{eqnarray*}
Observe that $\displaystyle\sum_{i}r^{d_{\mu}+\gamma\epsilon} \leq \sum_{i} \mu(x_i,r) \leq 1.$
Thus, since balls are disjoint it follows that the number of balls is bounded by $r^{-d_{\mu}-\gamma\epsilon}$ and
\begin{eqnarray*}
\sum_{i}\mu(B(x_i, 2r)) &\leq& \sum_{i} (2r)^{d_{\mu}-\epsilon''} \\
                        &\leq& r^{-d_{\mu}-\gamma\epsilon}(2r)^{d_{\mu}-\epsilon''} \\
                        &\leq& 2^{d_{\mu}-\epsilon''}r^{-\gamma\epsilon -\epsilon''}.
\end{eqnarray*}
Then, we obtain that

$$\mu\lt(\lt\{x: \tau_r(x) \leq r^{-d_{\mu}+\epsilon}\rt\}\rt) \leq C2^{d_\mu-\epsilon''}r^{-\gamma\epsilon-\epsilon'' +a} + \mu\lt(A_{\gamma\epsilon}^c(2r)\right)+\mu\lt(D_r^c\rt)+ \mu\left( A_{(-\epsilon'')}^c(2r)\right).$$

Hence,
$$\underline{f}(-\epsilon) \geq \underset{r \rightarrow 0}{\underline{\lim}} \frac{1}{\log r} \log \left (C2^{d_{\mu}-\epsilon''}r^{-\gamma\epsilon-\epsilon''+a} + \mu\lt(A_{\gamma\epsilon}^c(2r)\right)+\mu\lt(D_r^c\rt)+ \mu\left( A_{(-\epsilon'')}^c(2r)\right)\right).$$
Finally, using the definitions of $\underline{\psi}$ and $\underline{\varphi}$ we get by Lemma \ref{esslem} that

$$\underline{f}(-\epsilon) \geq \min \left\{-\gamma\epsilon-\epsilon''+a, \underline{\psi}(\gamma\epsilon),\underline{\varphi}(a,\epsilon), \underline{\psi}(-\epsilon'')\right\}.$$
This concludes the proof of the theorem.
\end{proof}

We finish with a brief result that may help to estimate the rate function for fast returns.
\begin{Prop}\label{Henon}
If there exist constants $a,b>0$ such that for all $r \in (0,1):$
\begin{itemize}
  \item there exists a set $\Omega_r$ such that $$\mu(\Omega_r^c)<r^b;$$
  \item for all $x \in \Omega_r,$
$$\lt|\mu_{B(x,r)}\lt(\tau_{B(x,r)}> \frac{t}{\mu(B(x,r))}\rt)-e^{-t}\rt| \leq r^a,$$ for every $t>0$.
\end{itemize}
Then, $\underline{\varphi}(a, \epsilon)\geq \min\{\underline{\psi}(a-\epsilon),b\}.$
\end{Prop}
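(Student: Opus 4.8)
The plan is to bound directly the measure of the set defining $\underline{\varphi}(a,\epsilon)$, namely
$$B_r:=\lt\{x_0:\ \mu_{B(x_0,2r)}\lt(\tau_{B(x_0,2r)}\le r^{-d_{\mu}+\epsilon}\rt)\ge Cr^a\rt\},$$
by $\mu(\Omega_{2r}^c)$ plus the measure of a set of abnormally large balls, which is controlled by $\underline{\psi}$. First I would apply the two hypotheses with radius $2r$ in place of $r$ (legitimate once $r<1/2$): this gives $\mu(\Omega_{2r}^c)<(2r)^b$ and, for every $x_0\in\Omega_{2r}$ and every $t>0$,
$$\mu_{B(x_0,2r)}\lt(\tau_{B(x_0,2r)}\le\tfrac{t}{\mu(B(x_0,2r))}\rt)\ \le\ 1-e^{-t}+(2r)^a\ \le\ t+(2r)^a.$$
Specializing to $t=t_0:=r^{-d_{\mu}+\epsilon}\,\mu(B(x_0,2r))$, so that $t_0/\mu(B(x_0,2r))=r^{-d_{\mu}+\epsilon}$ exactly, yields for all $x_0\in\Omega_{2r}$
$$\mu_{B(x_0,2r)}\lt(\tau_{B(x_0,2r)}\le r^{-d_{\mu}+\epsilon}\rt)\ \le\ r^{-d_{\mu}+\epsilon}\,\mu(B(x_0,2r))+(2r)^a.$$

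Next I would discard the points whose ball is too large. Treating the relevant case $0<a<\epsilon$ (so that $d_{\mu}-\epsilon+a<d_{\mu}$), put $E_r:=\{x_0:\ \mu(B(x_0,2r))>(2r)^{d_{\mu}-\epsilon+a}\}$. For $x_0\in\Omega_{2r}\setminus E_r$ one has $r^{-d_{\mu}+\epsilon}\mu(B(x_0,2r))\le 2^{d_{\mu}-\epsilon+a}r^{a}$, hence
$$\mu_{B(x_0,2r)}\lt(\tau_{B(x_0,2r)}\le r^{-d_{\mu}+\epsilon}\rt)\ \le\ \lt(2^{d_{\mu}-\epsilon+a}+2^{a}\rt)r^{a}.$$
Choosing the constant $C$ in \eqref{exprateretur} to be anything strictly larger than $2^{d_{\mu}-\epsilon+a}+2^{a}$ (the value of $C$ being immaterial), this proves $B_r\subset\Omega_{2r}^c\cup E_r$, and consequently
$$\mu(B_r)\ \le\ \mu(\Omega_{2r}^c)+\mu(E_r)\ \le\ (2r)^b+\mu(E_r).$$

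It remains to pass to exponential rates. The term $(2r)^b$ has rate $\underline{\lim}_{r\to0}\tfrac{1}{\log r}\log(2r)^b=b$. For $E_r$, observe that $\{x:\mu(B(x,2r))>(2r)^{d_{\mu}-\epsilon+a}\}=\{x:\tfrac{\log\mu(B(x,2r))}{-\log(2r)}>-d_{\mu}+(\epsilon-a)\}$, which is exactly the event occurring in the definition of $\underline{\psi}(-(\epsilon-a))=\underline{\psi}(a-\epsilon)$, only evaluated at radius $2r$; since $\tfrac{\log(2r)}{\log r}\to1$, replacing $r$ by $2r$ leaves the $\tfrac{1}{\log r}\log$--liminf unchanged, so $\underline{\lim}_{r\to0}\tfrac{1}{\log r}\log\mu(E_r)=\underline{\psi}(a-\epsilon)$. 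Feeding the two-term bound for $\mu(B_r)$ into Lemma~\ref{esslem} now gives $\underline{\varphi}(a,\epsilon)\ge\min\{b,\underline{\psi}(a-\epsilon)\}$, as claimed; if one of these two rates vanishes the inequality is trivial, since $\underline{\varphi}\ge0$ always.

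The argument is essentially bookkeeping, and the only delicate points are: (i) tracking the radius $2r$ occurring in $\underline{\varphi}$ against the radius $r$ in the hypotheses, together with the harmless constant factors such as the $2^{d_{\mu}-\epsilon+a}$ produced by $(2r)^{d_{\mu}-\epsilon+a}$; (ii) the exact rewriting of $\{\tau_{B(x_0,2r)}\le r^{-d_{\mu}+\epsilon}\}$ as $\{\tau_{B(x_0,2r)}\le t_0/\mu(B(x_0,2r))\}$ so that hypothesis E2 can be applied with a genuine $t_0>0$; and (iii) recognizing $E_r$ at radius $2r$ as precisely the set whose decay rate is $\underline{\psi}(a-\epsilon)$, which also explains why $a<\epsilon$ is the natural regime.
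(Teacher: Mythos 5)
Your proof is correct and follows essentially the same approach as the paper: apply the two hypotheses at radius $2r$, use E2 to bound $\mu_{B(x_0,2r)}\left(\tau_{B(x_0,2r)}\le r^{-d_\mu+\epsilon}\right)$ on the set of points in $\Omega_{2r}$ whose ball is not abnormally large, deduce the inclusion of $B_r$ in the union of the two complementary bad sets, and conclude with Lemma~\ref{esslem}. Two small refinements over the paper's write-up are worth pointing out: your inequality $1-e^{-t}\le t$ is a clean replacement for the paper's informal first-order expansion of $e^{-t}$ with $o(r^{2a})$ bookkeeping; and your cut-off set $E_r$ carries the correct orientation of the inequality, since passing from $Cr^a/\mu(B(x_0,2r))$ to $r^{-d_\mu+\epsilon}$ requires the upper bound $\mu(B(x_0,2r))\le (2r)^{d_\mu+a-\epsilon}$, whereas the paper's set $N_r=\{\mu(B(x,r))\ge r^{d_\mu+a-\epsilon}\}$ is written with the inequality reversed (an apparent slip; the argument goes through exactly as you have it).
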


\begin{proof}
Take $t=Cr^a, C>0.$ Making the first order expansion of $e^{-t}$, we have for $x\in\Omega_r$
$$\lt|\mu_{B(x,r)}\lt(\tau_{B(x,r)}> \frac{Cr^a}{\mu(B(x,r))}\rt)-1+Cr^a + o(r^{2a})\rt| \leq r^a,$$ which implies
$$\lt|\mu_{B(x,r)}\lt(\tau_{B(x,r)}< \frac{Cr^a}{\mu(B(x,r))}\rt)+Cr^a + o(r^{2a})\rt| \leq r^a.$$ So, it follows that
$$\mu_{B(x,r)}\lt(\tau_{B(x,r)}< \frac{Cr^a}{\mu(B(x,r))}\rt) < r^a.$$
Let $N_r$ be a set defined by
$N_r=\lt\{x: \mu (B(x,r))\geq r^{d_{\mu}+a - \epsilon}\rt\}.$ For $x \in N_r\cap\Omega_r$ we obtain
$$\mu_{B(x,r)}\lt(\tau_{B(x,r)}< Cr^{-d_{\mu}+\epsilon}\rt) < r^a.$$
Thus,
$$\mu\lt(\lt\{x: \mu_{B(x,2r)}\lt(\tau_{B(x,2r)}\leq r^{-d_{\mu}+ \epsilon}\rt)>2^ar^a \rt\}\rt) \leq \mu((N_{2r}\cap\Omega_{2r})^c)\leq \mu(N_{2r}^c)+ \mu(\Omega_{2r}^c).$$
Finally, by Lemma \ref{esslem}, we get
$$\underline{\varphi}(a, \epsilon) \geq \min\{\underline{\psi}(a-\epsilon),b\}.$$
\end{proof}

\end{document}